\DeclareMathOperator{\Irr}{Irr}     \DeclareMathOperator{\IBr}{IBr}
\DeclareMathOperator{\GL}{GL}
     \DeclareMathOperator{\SL}{SL}
\newtheorem{thm}{Theorem}[section]
\newtheorem{lem}[thm]{Lemma}
\newtheorem{prop}[thm]{Proposition}
\begin{document}
\title{Groups with one or two super-Brauer character theories}
\author{{\rm Xiaoyou Chen$^{1}$}{\thanks{
{\it E-mail address}: cxymathematics$@$hotmail.com}},
Mark L. Lewis$^{2}${\thanks{Corresponding author
{\it E-mail address}: lewis$@$math.kent.edu}}\\
{\small({\small {\rm 1.} College of Science, Henan University of Technology, Zhengzhou 450001, China;}
}\\
{\small{\small {\rm 2.} Department of Mathematical Sciences, Kent State University, Kent 44242, USA)}}
}

\date{}
\maketitle
\begin{abstract}
A super-Brauer character theory of a group $G$ and a prime $p$ is a pair consisting of a partition of the irreducible $p$-Brauer characters and
a partition of the $p$-regular elements of $G$ that satisfy certain properties.
We classify the groups and primes that have exactly one super-Brauer character theory.
We discuss the groups with exactly two super-Brauer character theories.
\end{abstract}

\vspace{3mm}
\noindent{\bf Key words:} super character theory; super-Brauer character theory

\vspace{3mm}
\noindent{\bf 2010 Mathematics Subject Classification:} 20C20


\section{Introduction}

In this paper, all groups are finite.  In {\cite{Ref4}}, Diaconis and Isaacs defined supercharacter theories for an arbitrary group, and developed some properties of supercharacter theories.  Roughly speaking, in a supercharacter theory, the irreducible characters are replaced by supercharacters and the conjugacy classes are replaced by super-classes.  We will discuss supercharacter theories in more detail in the next section.  Supercharacter theories have been applied to study several kinds of groups, and some examples can be seen in {\cite{Ref1}}, {\cite{Ref2}}, {\cite{Ref3}} and {\cite{Ref4}}.

For a fixed prime $p$, the $p$-Brauer characters of $G$ play a similar role for the characteristic $p$ representations of $G$ that ordinary characters paly for the characteristic $0$ representations of $G$.  One theme in Brauer character theory is to determine what properties of the ordinary characters can be translated to properties of Brauer characters.  With this in mind, the first author and Zeng defined an analogous ``super theory'' for $p$-Brauer characters in \cite{Chen2011}.

It is not difficult to see that the trivial group and $\mathbb{Z}_2$ are the only groups that have only one supercharacter theory.  In \cite{Lewis201506}, Burkett, Lamar, the second author, and Wynn classified groups with exactly two supercharacter theories.  They proved that a group $G$ has exactly two supercharacter theories if and only if $G$ is isomorphic to one of $\mathbb{Z}_3$, $S_3$, or ${\rm Sp}(6,2)$.  Inspired by this, we consider groups with one or two super-Brauer character theories in this note.

We begin by considering for which groups $G$ and primes $p$ does $G$ have exactly one super-Brauer character theory.
It is not difficult to see that if $G$ has only one or two $p$-regular conjugacy classes then $G$ will have only one super-Brauer character theory.  We show that these are the only possibilities.

\begin{thm}
Let $G$ be a group and let $p$ be a prime.  Then $G$ has only one super-Brauer character theory if and only if $G$ has only one or two $p$-regular conjugacy classes.
\end{thm}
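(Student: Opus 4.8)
The plan is to prove the two implications separately, the reverse one by a direct counting argument and the forward one by contraposition. Throughout, write $G_{p'}$ for the set of $p$-regular elements of $G$ and let $k$ denote the number of $p$-regular conjugacy classes; recall that $k=|\IBr(G)|$. I take a super-Brauer character theory to be a pair $(\mathcal K,\mathcal X)$, where $\mathcal K$ partitions $G_{p'}$ into superclasses (each a union of $p$-regular classes) and $\mathcal X$ partitions $\IBr(G)$, subject to the three conditions $\{1\}\in\mathcal K$, $|\mathcal K|=|\mathcal X|$, and every associated super-Brauer character being constant on each member of $\mathcal K$. Two distinguished theories will do all the work: the finest one, in which each $p$-regular class and each irreducible Brauer character forms its own block (so $|\mathcal K|=|\mathcal X|=k$), and the coarsest one, with $\mathcal K=\{\{1\},\,G_{p'}\setminus\{1\}\}$ and $\mathcal X=\{\{1_G\},\,\IBr(G)\setminus\{1_G\}\}$.

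For the reverse implication, suppose $k\le 2$. If $k=1$ then $G_{p'}=\{1\}$ and $\IBr(G)=\{1_G\}$, and each side has only the one-block partition, so the theory is unique. If $k=2$, list the $p$-regular classes as $\{1\}$ and $C$. Because $\{1\}$ must be a superclass and every superclass is a union of $p$-regular classes, the only admissible partition of $G_{p'}$ is $\{\{1\},C\}$; then $|\mathcal X|=2$ forces $\mathcal X=\{\{1_G\},\IBr(G)\setminus\{1_G\}\}$. Thus both partitions are completely determined, and the unique candidate coincides with the finest theory, whose validity I would check exactly as in the ordinary case: each singleton super-Brauer character is a scalar multiple of a single $\varphi\in\IBr(G)$ and is therefore constant on every $p$-regular class. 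Hence $G$ has exactly one super-Brauer character theory.

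For the forward implication I prove the contrapositive: if $k\ge 3$, then $G$ admits at least two super-Brauer character theories. The finest theory has $k\ge 3$ blocks. For the coarsest theory the conditions $\{1\}\in\mathcal K$ and $|\mathcal K|=|\mathcal X|=2$ are immediate, so the point is constancy of the two super-Brauer characters on $G_{p'}\setminus\{1\}$. The block $\{1_G\}$ yields a scalar multiple of the trivial Brauer character, which is constant. For the block $\IBr(G)\setminus\{1_G\}$ I would invoke the modular analogue of the regular character: weighting by the projective indecomposable degrees gives $\sum_{\varphi\in\IBr(G)}\Phi_{\varphi}(1)\,\varphi=\rho|_{G_{p'}}$, where $\rho$ is the regular character of $G$, and this restriction vanishes at every nonidentity $p$-regular element. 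Consequently the super-Brauer character attached to $\IBr(G)\setminus\{1_G\}$ equals the constant $-\Phi_{1_G}(1)$ on $G_{p'}\setminus\{1\}$, so the coarsest theory is valid. Since it has $2\neq k$ blocks, it differs from the finest theory, producing two distinct theories as required.

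The main obstacle is precisely the verification in the previous paragraph: confirming that the coarsest pair meets the constancy condition in the exact form in which a super-Brauer character is defined in \cite{Chen2011}. This hinges on using the correct degrees as weights---the projective indecomposable degrees $\Phi_{\varphi}(1)$ rather than the Brauer degrees $\varphi(1)$---so that the identity $\sum_{\varphi}\Phi_{\varphi}(1)\varphi=\rho|_{G_{p'}}$ (equivalently, a statement about decomposition numbers together with the vanishing of $\rho$ off the identity) is available. A minor secondary point, used in the case $k\le 2$, is that superclasses must be unions of $p$-regular conjugacy classes; once these two facts are in hand, the remainder is routine bookkeeping.
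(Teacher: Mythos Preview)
Your proposal is correct and follows essentially the same approach as the paper. The paper establishes in Section~2 that $\mathfrak{m}(G^\circ)$ and $\mathfrak{M}(G^\circ)$ are always super-Brauer character theories (giving exactly the super-Brauer characters $\Phi_{1_{G^\circ}}(1)\,1_{G^\circ}$ and $\rho_G^\circ - \Phi_{1_{G^\circ}}(1)\,1_{G^\circ}$ you describe), and then the proof of Proposition~\ref{two one} simply observes that uniqueness is equivalent to $\mathfrak{m}(G^\circ)=\mathfrak{M}(G^\circ)$, which in turn is equivalent to $G^\circ\setminus\{1\}$ being empty or a single conjugacy class; your contrapositive argument and your verification via projective indecomposable degrees amount to the same thing, just with the background facts folded into the proof rather than quoted from Section~2.
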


Note that the identity is a $p$-regular element.  Thus, a group $G$ has one $p$-regular conjugacy class if and only if $G$ has no elements of $p'$ order.  I.e., $G$ has one $p$-regular conjugacy class if and only if $G$ is a $p$-group.  The groups with two $p$-regular conjugacy classes have been classified by Ninomiya in \cite{Ninomiya199111}.  We will provide this classification in Section \ref{sect one}.  In particular, we can use this classification to describe all groups and primes with one super-Brauer character theory.  Notice that unlike the ordinary case where there were only two such groups, in this case, there are infinitely groups that occur.

Next, we consider the groups and primes that have two super-Brauer character theories.  We will see that if a group $G$ has three $p$-regular classes, then $G$ must have two super-Brauer character theories.  We will also see that there exist groups with more than three $p$-regular classes that only have two super-Brauer character theories.  Unfortunately, with the tools we currently have available, the groups with two super-Brauer character theories that are not $p$-solvable appear to be beyond our capabilities.  In particular, there is a connection between supercharacter theories and Schur rings that is expressed in a bijection between the supercharacter theories of a group and the $S$-rings over $G$ contained in ${\bf Z} (\mathbb{C} [G])$ (see Proposition 2.4 of \cite{hen}).  This connection was exploited computationally in \cite{Lewis201506} to prove that ${\rm Sp} (6,2)$ has only two supercharacter theories.  At this time, we do not know of any such connections for super-Brauer character theory. Thus, we focus on the $p$-solvable groups that have two super-Brauer character theories.

\begin{thm}\label{two theories}
Let $G$ be a $p$-solvable group with ${\bf O}_p (G) = 1$.  Then $G$ has exactly two super-Brauer character theories if and only if either {\rm (1)} $G$ has three $p$-regular classes, or {\rm (2)} $G$ has a normal $p$-complement $M$, the subgroup $M$ is minimal normal in $G$, and $M$ has exactly two $P$-invariant supercharacter theories where $P$ is a Sylow $p$-subgroup of $G$.
\end{thm}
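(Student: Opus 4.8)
The plan is to organize everything around the invariant $k := |\IBr(G)|$, which equals the number of $p$-regular classes of $G$, and around the fact (parallel to the ordinary theory of \cite{Ref4}) that in any super-Brauer character theory the class-partition and the $\IBr$-partition have the same number of blocks and determine each other, that the singleton $\{1\}$ is always a superclass and the trivial Brauer character always a supercharacter, and that a finest (``trivial'') and a coarsest theory always exist, with $k$ and $2$ blocks respectively. Theorem~1.1 gives $k\ge 3$ whenever there are two theories. If $k=3$, then (since $\{1\}$ must be a superclass) the two remaining $p$-regular classes may only be separated or merged, so there are at most two class-partitions, hence at most two theories; as finest ($3$ blocks) and coarsest ($2$ blocks) are distinct and valid, there are exactly two. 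This settles the equivalence of exactly two theories with condition (1) when $k=3$.

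The core technical step is a correspondence lemma, which I would state and prove first and then reuse in both directions: \emph{if $M$ is a normal $p$-complement of $G$ with $P\in\Syl_p(G)$, then the super-Brauer character theories of $G$ are in block-number-preserving bijection with the $P$-invariant supercharacter theories of $M$}. The input facts are that the $p$-regular elements of $G$ are exactly $M$ (it is the unique Hall $p'$-subgroup, so it contains every $p'$-element), that the $p$-regular classes of $G$ are the $P$-orbits on $\cl(M)$, and that by Fong--Clifford theory $\IBr(G)$ is parametrized by the $P$-orbits on $\Irr(M)=\IBr(M)$ (each $P$-orbit contributing a single simple module since $P$ is a $p$-group). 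A partition of the $p$-regular classes of $G$ is then the same datum as a $P$-invariant partition of $\cl(M)$ whose blocks are $P$-stable, and dually on the character side; one checks that the super-Brauer axioms for the descended pair on $G$ are equivalent to the supercharacter axioms for the $P$-invariant pair on $M$ (using that $M$ is a $p'$-group, so its Brauer table is its ordinary character table, and that the $p$-regular class algebra of $G$ is the $P$-fixed subalgebra of that of $M$). Granting this lemma, condition (2) immediately yields exactly two theories, and it will convert the structural conclusion of the converse into the statement that $M$ has exactly two $P$-invariant supercharacter theories.

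For the converse with $k\ge 4$, I would exploit the \emph{absence of any three-block theory}: a three-block theory would differ from both the finest ($k\ge 4$ blocks) and the coarsest ($2$ blocks), contradicting exactness. The engine is the super-Brauer theory $\mathrm{SBCT}(N)$ attached to a proper nontrivial normal subgroup $N$ (the Brauer analogue of the normal-subgroup supercharacter theory), whose superclasses are $\{1\}$, the nontrivial $p$-regular classes inside $N$, and the $p$-regular classes outside $N$, with a matching three-fold split of $\IBr(G)$ into the principal character, the remaining inflations from $G/N$, and the rest; a short count shows this has exactly three blocks precisely when $\{1\}\subsetneq N\cap R\subsetneq R$, where $R$ denotes the set of $p$-regular elements. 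Hence for every proper nontrivial normal $N$ we must have $N\cap R=\{1\}$ or $N\cap R=R$. The first case forces every nontrivial element of $N$ to be $p$-singular, so $N$ is a $p$-group and $N\le \mathbf{O}_p(G)=1$, a contradiction; therefore every proper nontrivial normal subgroup contains all nontrivial $p$-regular elements. Two distinct minimal normal subgroups would then intersect trivially while both containing $R\setminus\{1\}\neq\emptyset$ (as $k\ge 4$), which is impossible, so there is a unique minimal normal subgroup $M$ with $M\supseteq R\setminus\{1\}$. Since $G$ is $p$-solvable with $\mathbf{O}_p(G)=1$, a minimal normal subgroup cannot be a $p$-group, so $M$ is a $p'$-group; and since $M$ contains every $p'$-element we get $\mathbf{O}^p(G)=\langle R\rangle\subseteq M$, whence $M=\mathbf{O}^p(G)$ by minimality, so $G/M$ is a $p$-group and $M$ is a normal $p$-complement that is minimal normal. (The degenerate case where $G$ has no proper nontrivial normal subgroup is handled directly: then $G$ is simple and $p$-solvable, hence a $p'$-group, so $P=1$ and $M=G$ is itself the minimal-normal $p$-complement.) Finally the correspondence lemma transports ``$G$ has exactly two super-Brauer character theories'' to ``$M$ has exactly two $P$-invariant supercharacter theories,'' completing condition (2).

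The main obstacle is the correspondence lemma, specifically the Brauer-theoretic verification that the supercharacter-theory axioms transfer between $G$ and the $P$-fixed data on $M$; the Fong--Clifford parametrization of $\IBr(M\rtimes P)$ and the identification of the $p$-regular class algebra of $G$ with the $P$-fixed part of $\mathbb{C}[\cl(M)]$ are where care is needed, and getting the notion of ``$P$-invariant'' (block-stable rather than merely block-permuting) to match the descent exactly. By contrast, once no three-block theory is available, the purely group-theoretic extraction of the minimal-normal $p$-complement is clean, as is the $k=3$ endpoint.
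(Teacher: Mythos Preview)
Your proposal is correct and uses the same two key ingredients as the paper: the correspondence lemma (which is exactly the paper's Theorem~\ref{Hall}) and the three-block super-Brauer theory $\mathrm{SBCT}(N)$ attached to a normal subgroup. The only difference is organizational: the paper picks a single minimal normal subgroup $M$---automatically a $p'$-group since ${\bf O}_p(G)=1$ and $G$ is $p$-solvable---and splits on whether $M\subsetneq G^\circ$ (forcing three $p$-regular classes) or $M=G^\circ$ (giving the normal $p$-complement), whereas you first separate $k=3$ from $k\ge 4$ and then, in the latter case, quantify over \emph{all} proper nontrivial normal $N$ to pin down $M$. Your version therefore needs $\mathrm{SBCT}(N)$ to be a valid super-Brauer theory even when $N$ is not a $p'$-group, and the paper's explicit supercharacters $\rho_{G/M}^\circ-\Phi_{1_{G^\circ}}(1)\,1_{G^\circ}$ and $\rho_G^\circ-\rho_{G/M}^\circ$ are only checked there under the hypothesis $M\subseteq G^\circ$; but since a single minimal normal subgroup already suffices for your conclusion, you can restrict to that case and recover the paper's argument verbatim without this extra verification.
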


In a series of papers \cite{Ninomiya199102, Ninomiya199103, Ninomiya199301}, Ninomiya has classified the finite groups that have exactly three $p$-regular conjugacy classes.  Thus, we need only determine the groups that have a normal $p$-complement with exactly two supercharacter theories that are invariant under the action of a Sylow $p$-subgroup and the $p$-complement is not the union of three conjugacy classes.  Unfortunately, when we encounter groups where the normal $p$-complement is not solvable, we have the same difficulties as above; so these also seem beyond our currently capabilities.  At this time, we are able to determine these groups that arise in the solvable case.  We will do this in Theorem \ref{three P-invariant}.  Thus, we will be able to list all of the solvable groups that have exactly two super-Brauer character theories.

\section{Background}

We begin by recalling the definition of a supercharacter theory.  Let $G$ be a group and write $\Irr (G)$ for the set of irreducible characters of $G$, and partition $G$ and $\Irr (G)$ into the collections $\mathfrak{L}$ and $\mathfrak{Y}$ of nonempty subsets, respectively.  Following Diaconis and Isaacs, we say that $(\mathfrak {L}, \mathfrak {Y})$ is a {\it supercharacter theory} for $G$ if (a) $|\mathfrak {L}| = |\mathfrak {Y}|$, (b) for each set $Y \in \mathfrak{Y}$ there is a character $\chi_{Y}$ whose irreducible constituents all lie in $Y$ so that $\chi_{Y}$ is constant on the members of $\mathfrak{L}$, and (c) the set $\{ 1 \} \in \mathfrak {L}$. The characters $\chi_{Y}$ are called {\it supercharacters} and the elements of $\mathfrak{L}$ are called {\it superclasses}.  We use ${\rm Sup}(G)$ to denote the set of supercharacter theories of $G$.

Next, we recall the definition of a super-Brauer character theory.  Assume that $G$ is a group and $p$ is a fixed prime number.  Let $G^\circ$ be the set of $p$-regular elements of $G$.  Denote by $\IBr (G)$ the set of irreducible $p$-Brauer characters of $G$.  For convenience, when the prime $p$ is clear, we denote $p$-regular classes and $p$-Brauer characters by regular classes and Brauer characters, respectively.  For other terminologies and notations, one can refer to {\cite{Ref5}} and {\cite{Ref6}}.

Following \cite{Chen2011}, a {\it super-Brauer character theory} of $G$ is a pair $(\mathfrak{X}, \mathfrak{K})$ where $\mathfrak{X}$ is a partition of $\IBr (G)$ and $\mathfrak{K}$ is a partition of $G^\circ$ that satisfy (1) $|\mathfrak {X}| = |\mathfrak {K}|$, (2) for each set $X \in \mathfrak{X}$ there is a nonzero Brauer character $\theta_{X}$ whose irreducible constituents all lie in the set $X$ such that $\theta_{X}$ is constant on every $K \in \mathfrak{K}$, and (3) $\{ 1 \} \in \mathfrak {K}$.  In this case, we refer to the $\theta_{X}$'s as super-Brauer characters and the sets $K\in \mathfrak{K}$ as super-regular classes.  We write ${\rm Sup} (G^\circ)$ to denote the set of all super-Brauer character theories of $G$.

For any group $G$, there exist two {\it trivial} super-Brauer character theories:

\begin{enumerate}
\item $\mathfrak{m} (G^\circ)$ whose partitions are $\mathfrak{X}$ consists just of the singleton sets $\{ \phi \}$ where $\phi$ runs through $\IBr (G)$ and $\mathfrak{K}$ consists of the conjugacy classes in $G^\circ$

\item $\mathfrak{M} (G^\circ)$ whose partitions are $\mathfrak{X} = \{ \{ 1_{G^\circ} \}, \IBr (G) - \{ 1_{G^\circ} \} \}$ and $\mathfrak {K} = \{ \{ 1 \}, G^\circ - \{ 1 \} \}$.
\end{enumerate}

Note that $1_{G^\circ}$ is the principal Brauer character of $G$.  The super-Brauer characters in (2) are $\Phi_{1_{G^\circ}} (1) 1_{G^\circ}$ and $\rho_G^\circ - \Phi_{1_{G^\circ}} (1) 1_{G^\circ}$, where $\rho_G^\circ$ is the restriction of the regular character $\rho_G$ of $G$ to $G^\circ$ and $\Phi_{1_{G^\circ}}$ is the projective indecomposable character with respect to $1_{G^\circ}$.

\section{One super-Brauer character theory} \label{sect one}

We first classify groups with one super-Brauer character theory.  These are the groups where $\mathfrak{m} (G^\circ) = \mathfrak{M} (G^\circ)$.   Note that $p$-groups are trivially $\{ p, q \}$-groups for every prime $q$, so in the second conclusion of the following proposition, we are including $p$-groups.

\begin{prop} \label{two one}
Let $G$ be a nontrivial group and $p$ a fixed prime number.  Then $|{\rm Sup} (G^\circ)| = 1$ if and only if $G$ has at most two $p$-regular conjugacy classes.  Furthermore, if $G$ is a group with at most two $p$-regular conjugacy classes, then $G$ is a $\{ p, q \}$-group for some prime $q$.  In particular, $G$ is solvable.
\end{prop}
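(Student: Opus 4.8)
The plan is to reduce the equivalence to a count of the parts of the two trivial super-Brauer character theories, and then to extract the structural statement from the two-class hypothesis by an elementary order argument. Recall the standard fact that $|\IBr (G)|$ equals the number $k$ of $p$-regular conjugacy classes of $G$. Hence the finest theory $\mathfrak{m} (G^\circ)$ has exactly $k$ parts, while the coarsest theory $\mathfrak{M} (G^\circ)$ has exactly two parts when $k \geq 2$ and collapses to the single-part theory when $k = 1$ (the set $\IBr (G) - \{ 1_{G^\circ} \}$ being empty in that degenerate case). Since $\mathfrak{m} (G^\circ)$ and $\mathfrak{M} (G^\circ)$ always lie in ${\rm Sup} (G^\circ)$, the assumption $|{\rm Sup} (G^\circ)| = 1$ forces $\mathfrak{m} (G^\circ) = \mathfrak{M} (G^\circ)$, and comparing numbers of parts yields $k \leq 2$. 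This gives the forward direction.

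For the converse I would handle $k = 1$ and $k = 2$ by hand. When $k = 1$ we have $G^\circ = \{ 1 \}$ and $\IBr (G) = \{ 1_{G^\circ} \}$, so the only possible theory is the single-part one. When $k = 2$, write $G^\circ = \{ 1 \} \cup C$ with $C$ the unique nontrivial regular class. Any $(\mathfrak{X}, \mathfrak{K}) \in {\rm Sup} (G^\circ)$ must contain $\{ 1 \}$ as a part of $\mathfrak{K}$ by condition (3), so $\mathfrak{K}$ cannot consist of a single part (that would force $G^\circ = \{ 1 \}$); since $|\mathfrak{X}| = |\mathfrak{K}|$ and $\mathfrak{X}$ partitions a two-element set, we get exactly two parts on each side, forcing $\mathfrak{K} = \{ \{ 1 \}, C \}$ and $\mathfrak{X}$ equal to the two singletons of $\IBr (G)$. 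This recovers precisely $\mathfrak{m} (G^\circ) = \mathfrak{M} (G^\circ)$, so $|{\rm Sup} (G^\circ)| = 1$.

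It remains to prove the structural claim, which is the substantive content. If $k = 1$ then $G$ is a $p$-group and hence a $\{ p, q \}$-group for any prime $q$, so assume $k = 2$. Then every nontrivial $p$-regular element lies in the single class $C$; in particular all nontrivial $p'$-elements of $G$ are conjugate and therefore share a common order $n$. Because every power of a $p$-regular element is again $p$-regular, if $n$ were not prime one could exhibit a nontrivial $p'$-element of order a proper divisor of $n$, producing two nonconjugate nontrivial regular elements — a contradiction. Hence $n = q$ is a prime with $q \neq p$. Finally, if a prime $r \neq p$ divided $|G|$, Cauchy's theorem would supply a nontrivial $p'$-element of order $r$, which must then have order $q$, so $r = q$. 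Thus only $p$ and $q$ divide $|G|$, giving the $\{ p, q \}$-group conclusion, and Burnside's $p^a q^b$ theorem yields solvability. I expect the only place requiring genuine care to be this order argument — specifically the passage from ``all nontrivial regular elements conjugate'' to ``of common prime order'' — whereas the counting half of the equivalence is essentially bookkeeping about the two trivial theories.
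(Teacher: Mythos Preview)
Your argument is correct and follows essentially the same approach as the paper's proof: reduce the equivalence to $\mathfrak{m}(G^\circ) = \mathfrak{M}(G^\circ)$, then for the structural claim observe that all nontrivial $p$-regular elements share a common order and use Cauchy's theorem plus Burnside's $p^aq^b$ theorem. The only cosmetic difference is that the paper applies Cauchy first (to produce an element of prime order $q$ and conclude the common order equals $q$), whereas you first argue the common order is prime via powers and then invoke Cauchy to rule out other prime divisors; both routes are equally short.
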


\begin{proof}
We see that $G$ has only one super-Brauer character theory if and only if $\mathfrak{M} (G^\circ)$ and $\mathfrak{m} (G^\circ)$ are the same.  These two super-Brauer character theories are the same if and only if $G^\circ - \{ 1 \}$ is empty or consists a single conjugacy class.  If $G^\circ-\{1\}$ is empty, then $G$ has no nontrivial $p$-regular elements, so $G$ is a $p$-group, and thus, $G$ is a $\{ p,q \}$ group for any prime $q$.  On the other hand, if $G^\circ - \{ 1 \}$ consists of a single conjugacy class, then all of the nontrivial $p$-regular elements of $G$ must have the same order.  Since $G$ has nonidentity $p$-regular elements, there must exist a prime $q \ne p$ so that $q$ divides $|G|$.  By Cauchy's theorem, we know that $G$ must have an element of order $q$, and so, all the elements of $G^\circ$ have order $q$. Thus, $G$ is a $\{ p, q \}$-group, and by Burnside's $p^{a}q^{b}$-theorem, $G$ is solvable.
\end{proof}

The groups with one nontrivial $p$-regular conjugacy class are classified by Ninomiya and Wada \cite[Theorem 3.1]{Ninomiya199111}.  Recall that $G$ and $G/{\bf O}_p (G)$ have the same number of irreducible Brauer characters, so they have the same number of $p$-regular conjugacy classes. Obviously, $G$ has one $p$-regular conjugacy class if and only if $G$ is a $p$-group, and $G$ will have two $p$-regular conjugacy classes if and only if $G/{\bf O}_p(G)$ is one of the groups listed in the conclusion of \cite[Theorem 3.1]{Ninomiya199111} (or \cite[Theorem A]{Ninomiya199103}).  We have listed these groups in Table \ref{two classes}.  In light of Proposition \ref{two one}, $G$ will have only one super-Brauer character theory if and only if $G = {\bf O}_p (G)$ or $G/{\bf O}_p (G)$ is one of the groups listed in Table \ref{two classes}.

\begin{table}
$$\begin{tabular}{||c|c||}
\hline
\hline
$p$ & $G/{\bf O}_p (G)$ \\
\hline
\hline
any prime & $1$ \\
\hline
any odd prime & $\mathbb{Z}_{2}$ \\
\hline
$2$ & $E_{3^{2}}\rtimes P$ where $P$ is one of $\mathbb{Z}_{8}$, $Q_{8}$, or $S_{16}$ \\
 & $Q_8$ is the quaternions of order $8$ \\
 & $S_{16}$ is the semi-dihedral group of order $8$ \\
\hline
$2$ &  $\mathbb{Z}_{q}\rtimes \mathbb{Z}_{2^{n}}$, where $q=2^{n}+1$ is a Fermat prime \\
\hline
$2^{n}-1$ (a Mersenne prime) & $G\simeq E_{2^{n}}\rtimes \mathbb{Z}_{p}$ \\
\hline
\hline
\end{tabular}$$
\caption{Groups $G$ which have at most two $p$-regular classes}\label{two classes}
\end{table}

\section{$A$-invariant supercharacter theories}

In general, it is not clear that there is any relationship between supercharacter theories and super-Brauer character theories.  We will now show that in one case there is a relationship.

Let $G$ be a group and let $A$ be a group that acts on $G$ via automorphisms.  We can define an action of $A$ on $\Irr (G)$ by $\chi^a (g^a) = \chi (g)$ for all $a \in A$, $g \in G$ and $\chi \in \Irr (G)$.  This defines an action on subsets and partitions of $G$  and $\Irr (G)$.  It is not difficult to see that $A$ will act on supercharacter theories of $G$.  In addition, a supercharacter theory of $G$ will be $A$-invariant if and only if each set in the partition of $G$ is a union of $A$-orbits of $G$ and each set in the partition of $\Irr (G)$ will be a union of $A$-orbits of $\Irr (G)$.  We write ${\rm Sup}_A (G)$ for the set of $A$-invariant supercharacter theories of $G$.

\begin{thm} \label{Hall}
Let $N$ be a normal Hall $p$-complement of a group $G$, and let $P$ be a Sylow $p$-subgroup of $G$.  Then there is a bijection between ${\rm Sup} (G^\circ)$ and ${\rm Sup}_P (N)$.  Furthermore, the corresponding theories will have the same partition of $G^\circ = N$.
\end{thm}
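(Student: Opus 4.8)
The plan is to realize both ${\rm Sup}(G^\circ)$ and ${\rm Sup}_P(N)$ as packaging the same combinatorial data on $N$, using the fact that restriction identifies $\IBr(G)$ with the set of $P$-orbits on $\Irr(N)$. First I would record the structural facts. Since $N$ is a normal Hall $p$-complement, every $p$-regular element of $G$ lies in $N$ (its image in the $p$-group $G/N$ is a $p'$-element, hence trivial), so $G^\circ = N$; and as $N$ is a $p'$-group we have $\IBr(N) = \Irr(N)$. The group $G/N \cong P$ acts on $\Irr(N)$, and $N$ acts trivially, so the $G$-orbits and $P$-orbits on $\Irr(N)$ coincide. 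The key input is the Clifford theory of Brauer characters in the case where $G/N$ is a $p$-group: for each $\phi \in \IBr(G)$ the restriction $\phi_N = \phi|_N$ is the multiplicity-free sum $s_{\mathcal{O}} := \sum_{\theta \in \mathcal{O}} \theta$ over a single $P$-orbit $\mathcal{O} = \mathcal{O}_\phi$, and $\phi \mapsto \mathcal{O}_\phi$ is a bijection from $\IBr(G)$ onto the set of $P$-orbits on $\Irr(N)$. I expect this multiplicity-one statement to be the main obstacle: the hard point is that the ramification index equals $1$, which rests on the fact that a $p$-group has a unique irreducible $p$-Brauer character (the trivial one), applied to the inertia quotient $I_G(\theta)/N \le G/N$.

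Next I would set up the bijection of partitions. Restriction identifies the space of $P$-invariant (equivalently $G$-invariant) class functions on $N$ with the span of the $\phi_N$. Using that the super-Brauer characters $\theta_X$ are linearly independent class functions on $G^\circ$ constant on the members of $\mathfrak{K}$ with $|\mathfrak{X}| = |\mathfrak{K}|$, one sees that these $|\mathfrak{K}|$ functions span the space of $\mathfrak{K}$-block functions, which therefore consists of class functions; hence every member of $\mathfrak{K}$ is a union of $p$-regular conjugacy classes of $G$, that is (as these are the $P$-orbits of $N$-classes) a $P$-invariant union of conjugacy classes of $N$. Thus $\mathfrak{K}$ is already a $P$-invariant partition of $N$, and I take $\mathfrak{L} = \mathfrak{K}$. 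On the character side I transport $\mathfrak{X}$ across the bijection $\phi \leftrightarrow \mathcal{O}_\phi$: each block $X$ becomes the union $Y = \bigcup_{\phi \in X} \mathcal{O}_\phi$, a $P$-stable subset of $\Irr(N)$, giving a $P$-invariant partition $\mathfrak{Y}$ with $|\mathfrak{Y}| = |\mathfrak{X}|$. Running this in reverse, grouping the $\phi$ according to which block of $\mathfrak{Y}$ contains $\mathcal{O}_\phi$, is the inverse assignment, so on the level of data the correspondence is a bijection preserving the partition of $G^\circ = N$.

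Finally I would check that the supercharacter and super-Brauer axioms match under this correspondence. Axioms (1)$\leftrightarrow$(a) and (3)$\leftrightarrow$(c) are immediate from $|\mathfrak{X}| = |\mathfrak{Y}|$, $\mathfrak{K} = \mathfrak{L}$, and $\{1\} \in \mathfrak{K} \Leftrightarrow \{1\} \in \mathfrak{L}$. For the character axioms, going from $G$ to $N$ I restrict: if $\theta_X$ is a super-Brauer character, then $\chi_Y := \theta_X|_N$ is an ordinary character of the $p'$-group $N$ with constituents in $Y$ that is constant on the members of $\mathfrak{L} = \mathfrak{K}$, which is exactly (b). Going from $N$ to $G$ I use the canonical supercharacter $\chi_Y = \sum_{\theta \in Y} \theta(1)\theta$; since $\theta(1)$ is constant, say $d_{\mathcal{O}}$, on each orbit $\mathcal{O} \subseteq Y$, multiplicity-one gives $\chi_Y = \sum_{\mathcal{O} \subseteq Y} d_{\mathcal{O}}\, s_{\mathcal{O}} = \big( \sum_{\phi \in X} d_{\mathcal{O}_\phi} \phi \big)\big|_N$. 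Hence $\theta_X := \sum_{\phi \in X} d_{\mathcal{O}_\phi} \phi$ is a genuine nonzero Brauer character of $G$ (a positive integer combination) with constituents in $X$ whose restriction to $G^\circ = N$ equals $\chi_Y$, so $\theta_X$ is constant on every member of $\mathfrak{K}$; this is precisely (2). This last computation is where multiplicity-one is used essentially, which is why I regard the Clifford-theoretic step as the crux.
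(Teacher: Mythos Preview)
Your proof is correct and follows essentially the same approach as the paper: both use Green's theorem (the Clifford-theoretic fact that restriction gives a bijection from $\IBr(G)$ to $P$-orbits on $\Irr(N)$ with multiplicity-free orbit sums) to transport partitions of $\IBr(G)$ to $P$-invariant partitions of $\Irr(N)$ while keeping the partition of $G^\circ = N$ fixed, and verify the character axiom by restriction. Your treatment is slightly more careful in places---you justify explicitly why the blocks of $\mathfrak{K}$ are unions of conjugacy classes, and in the reverse direction you invoke the canonical supercharacter $\sum_{\theta \in Y}\theta(1)\theta$ rather than leaving that step implicit---but the core argument is the same.
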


\begin{proof}
Since $N$ is a $p'$-group, we know that $\Irr (N) = \IBr (N)$.  In view of Green's theorem (Theorem 8.11 of \cite{Ref6}), we know that for every character $\theta \in \Irr (N)$ there is a unique Brauer character $\phi \in \IBr (G)$ that lies over $\theta$ and $\phi_N = \sum_{i=1}^n \theta_i$ where $\{ \theta_1 = \theta, \dots, \theta_n \}$ is the $P$-orbit of $\theta$.  This gives a bijection from $\IBr (G)$ to the $P$-orbits of $\Irr (N)$.  Given a Brauer character $\phi \in \IBr (G)$, we write $O (\phi)$ for the $P$-orbit of $\Irr (N)$ that consists of the irreducible constituents of $\phi_N$.

If $X \subseteq \IBr (G)$, then we write $\bar X = \cup_{\phi \in X} O (\phi)$.  If $\mathfrak{X}$ is a partition of $\IBr (G)$, then $\bar {\mathfrak {X}} = \{ \bar X \mid X \in \mathfrak {X} \}$ will be a $P$-invariant partition of $\Irr (N)$.  Notice that $N = G^\circ$, and so, any partition of $N$ will be a partition of $G^\circ$ and vice-versa. In particular, any partition of $G^\circ$ where the sets are unions of $G$-conjugacy classes will have the sets are unions of $P$-orbits and so, will be a $P$-invariant partition of $N$.

Suppose $(\mathfrak{X},\mathfrak{K})$ is super-Brauer character theory for $G$.  We claim that $(\bar{\mathfrak{X}},\bar {\mathfrak{K}})$ is a $P$-invariant supercharacter theory for $N$.
We know for each set $X \in \mathfrak {X}$ that there exist integers $a_\phi$ so that $\sum_{\phi \in X} a_\phi \phi$ is constant on $K$ for all $K \in \mathfrak{K}$.  Observe that if $K \in \mathfrak{K}$ and $k \in K$, then $k \in N$. It follows for $X \in \mathfrak{X}$ that
$$\sum_{\theta \in \bar X} a_\theta \theta (k) = \sum_{\phi \in X} \sum_{\theta \in O(\phi)} a_\theta \theta (k) = \sum_{\phi \in X} a_\phi \phi (k)$$
where $a_\theta = a_\phi$ for $\theta \in O(\phi)$.  This implies that $\sum_{\theta \in \bar X} a_\theta \theta$ is constant on each $K \in \mathfrak{K}$.  It also implies that $(\bar {\mathfrak {X}},\mathfrak {K})$ is a $P$-invariant supercharacter theory for $N$.  Thus, the map $(\mathfrak{X},\mathfrak{K}) \mapsto (\bar{\mathfrak {X}},\mathfrak{K})$ is a well-defined function from ${\rm Sup} (G^\circ)$ to ${\rm Sup}_P (N)$.

Note that different super-Brauer character theories must have different partitions of $G^\circ$, so this map will be an injection.  Given a supercharacter theory $(\mathfrak {Y},\mathfrak {K}) \in {\rm Sup}_P (N)$, for $Y \in \mathfrak {Y}$, define $X (Y)$ by $X (Y)  = \{ \phi \mid O(\phi) \subseteq Y \}$ and set $\mathfrak {X} = \{ X(Y) \mid Y \in \mathfrak {Y} \}$.  It is not difficult to see $\bar {\mathfrak {X}} = \mathfrak {Y}$ and $(\mathfrak {X},\mathfrak {K})$ is a super-Brauer character theory.  This yields the stated bijection.
\end{proof}

\section{Two super-Brauer character theories}

Now, we turn our attention to groups with exactly two super-Brauer character theories.  For any group with at least three $p$-regular conjugacy classes, the two super-Brauer character theories $\mathfrak{M} (G^\circ)$ and $\mathfrak{m} (G^\circ)$ will be distinct.  It is easy to see that if $G$ has only three $p$-regular classes,
then $\mathfrak{m}(G^\circ)$ and $\mathfrak{M}(G^\circ)$ are the only possible super-Brauer character theories.
The interesting question is whether there exist groups with more than three $p$-regular classes that have only two super-Brauer theories.

We will present some solvable examples where this occurs later.  The only nonsolvable example that we know of where this occurs is $G = {\rm SP}_6 (2) \times P$ where $P$ is a $p$-group for some prime $p$ that does not divide $|{\rm SP}_6 (2)|$, but we would be surprised if other examples do not exist.

Determining the nonsolvable groups with two super-Brauer character theories seems out of reach at this time.
At this point, we consider only the $p$-solvable case.

\begin{proof}[Proof of Theorem \ref{two theories}]
Assume that $G$ has exactly two super-Brauer character theories: $\mathfrak{m}(G^\circ)$ and $\mathfrak{M}(G^\circ)$.  Let $M$ be a minimal normal subgroup of $G$.  Since $G$ is $p$-solvable and ${\bf O}_p(G) = 1$, it follows that $p$ does not divide $|M|$, so $M\subseteq G^\circ$.

Suppose that $M < G^\circ$.  Let $\mathfrak{R}$ be the partition of $G^\circ$ by $\{ \{ 1 \}, M - \{ 1 \}, G^\circ - M \}$, and let $\mathfrak{S}$ be the partition of $\IBr (G)$ by $\{ \{1_{G^\circ} \}, \IBr (G/M) - \{ 1_{G^\circ} \}$, $\IBr (G) - \IBr (G/M) \}$.  The Brauer character $\rho_{G/M}^o - \Phi_{1_{G^\circ}} (1) 1_{G^\circ}$, where $\rho_{G/M}^o$ is the restriction to $G^\circ$ of the inflation to $G$ of the regular character $\rho_{G/M}$ of $G/M$, has irreducible constituents in $\IBr (G/M) - \{ 1_{G^\circ} \}$, and it is not difficult to see that this Brauer character has the value $|G:M| - \Phi_{1_G^\circ} (1)$ on elements in $M - \{ 1 \}$ and the value $-\Phi_{1_G^\circ} (1)$ on elements in $G^\circ - M$.  On the other hand, the Brauer character $\rho_G^\circ-\rho_{G/M}^o$, where $\rho_G^\circ$ is the restriction of the regular character of $G$ to $G^\circ$, has irreducible constituents in $\IBr (G)- \IBr (G/M)$ and it has the value $0$ on elements of $G^\circ - M$ and the value of $-|G:M|$ on elements of $M - \{ 1 \}$.

We conclude that $(\mathfrak {R}, \mathfrak {S})$ is a super-Brauer character theory for $G$.  Since $G$ has exactly two super-Brauer character theories, $(\mathfrak{R},\mathfrak {S})$ must be either $\mathfrak {m} (G^\circ)$ or $\mathfrak {M} (G^\circ)$.  Because $G^\circ-M$ is not empty, we see that $(\mathfrak{R}, \mathfrak {S}) \neq \mathfrak {M} (G^\circ)$.  We deduce that $(\mathfrak{R}, \mathfrak {S}) = \mathfrak{m} (G^\circ)$, and this implies that $M - \{ 1 \}$ and $G^\circ - M$ are both conjugacy classes of $G$.  We conclude that $G$ has three $p$-regular conjugacy classes.

Thus, we may assume that $G^\circ = M$.  It follows that $M$ is a normal Hall $p$-complement of $G$.  In light of Theorem \ref{Hall}, we have that $M$ has exactly two $P$-invariant supercharacter theories.

Conversely, if $G$ has three $p$-regular classes, then it is obvious that $\mathfrak{m} (G^\circ)$ and $\mathfrak{M} (G^\circ)$ are the only possible super-Brauer character theories.  On the other hand, if $G$ has a normal $p$-complement $M$ and $M$ has exactly two $P$-invariant supercharacter theories,
then $G$ has two super-Brauer character theories by Theorem \ref{Hall}.
\end{proof}

Thus, the question of classifying the $p$-solvable groups with exactly two super-Brauer character theories is equivalent to determining the $p$-solvable groups with exactly three $p$-regular conjugacy classes and determining which $p$-solvable groups have a normal $p$-complement that has exactly two supercharacter theories that are invariant under the action of a Sylow $p$-subgroup.  Note that these two cases are not disjoint.

\section{Two $P$-invariant supercharacter theories }

As in the general question of determining the nonsolvable groups with two super-Brauer character theories, determining the nonsolvable $p'$-groups that are acted by a $p$-group having only two invariant supercharacter theories seems out of reach at this time.  Thus, we will focus on the solvable groups.  If $M$ is such a normal $p'$-subgroup, then we see from Theorem \ref{two theories} that $M$ will be minimal normal in $G$, and being solvable, this implies that $M$ is an elementary abelian $q$-group for some prime $q \neq p$.  We now show that either $p$ or $q$ must be $2$ when there are more than three orbits on the action of $P$.

\begin{lem} \label{cop lemma}
Let $p$ and $q$ be distinct primes, let $V$ be an elementary abelian $q$-group, and let $P$ be a $p$-group that acts via automorphisms irreducibly on $V$.  If $V$ has exactly two $P$-invariant supercharacter theories, then one of the following occurs:  {\rm (1)} $V$ is a union of three $P$-orbits, {\rm (2)} $q = 2$, or {\rm (3)} $p=2$.
\end{lem}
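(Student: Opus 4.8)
The plan is to assume we are in none of the three listed cases and to derive a contradiction by producing a third $P$-invariant supercharacter theory. Concretely, suppose $V$ has exactly two $P$-invariant supercharacter theories, that $V$ is \emph{not} a union of three $P$-orbits, and, for contradiction, that both $p$ and $q$ are odd. The two obvious $P$-invariant supercharacter theories are the theory $\mathfrak{S}$ whose superclasses are the $P$-orbits on $V$ (the finest $P$-invariant theory) and $\mathfrak{M}(V)=\{\{1\},V-\{1\}\}$; since every $P$-invariant supercharacter theory refines to a coarsening of the orbit partition, these are the two trivial theories and every $P$-invariant theory lies between them. Having exactly two forces $\mathfrak{S}\neq\mathfrak{M}(V)$, so $V$ has at least three $P$-orbits, and by our standing assumption at least four; equivalently $V-\{1\}$ is a union of at least three $P$-orbits.

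The key construction uses the inversion automorphism $\iota\colon x\mapsto x^{-1}$ of $V$. I would first invoke the standard fact that, for any $A\le\Aut(V)$, the $A$-orbits on $V$ together with the $A$-orbit sums on $\Irr(V)$ form a supercharacter theory of $V$; this rests on Brauer's permutation lemma, which guarantees that $A$ has equally many orbits on $V$ and on $\Irr(V)$. Because $\iota$ is central in $\Aut(V)$ it commutes with the image of $P$, and since $p$ is odd we get $A:=\langle P,\iota\rangle=P\times\langle\iota\rangle$. The resulting supercharacter theory $\mathfrak{T}$, whose superclasses are the $A$-orbits, is $P$-invariant because $P\le A$. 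The goal is then to show $\mathfrak{T}$ is a \emph{third} theory, contradicting the hypothesis.

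To see $\mathfrak{T}\neq\mathfrak{M}(V)$: each $A$-orbit has the form $Pv\cup Pv^{-1}$, a union of at most two $P$-orbits, so $A$ has at least two orbits on $V-\{1\}$ and $\mathfrak{T}$ has at least three superclasses, whereas $\mathfrak{M}(V)$ has only two. To see $\mathfrak{T}\neq\mathfrak{S}$: equality would force every $P$-orbit $O\subseteq V-\{1\}$ to be $\iota$-invariant. But $|O|$ is a power of $p$, hence \emph{odd}, so the involution $\iota$ acting on $O$ must fix a point; such a fixed point $v$ satisfies $v^{-1}=v$, forcing $v=1$ since $q$ is odd and $V$ has no involutions, contradicting $O\subseteq V-\{1\}$. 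Hence $\mathfrak{T}\notin\{\mathfrak{S},\mathfrak{M}(V)\}$, which is the desired contradiction, so $q=2$ or $p=2$.

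I expect the main obstacle to be the step $\mathfrak{T}\neq\mathfrak{S}$, namely the parity argument that an odd-order $P$-orbit cannot be inversion-stable; it is the point where the oddness of both $p$ and $q$ is genuinely consumed. The supporting inputs that must be stated carefully are the claim that the orbits of a commuting group of automorphisms really do form a supercharacter theory (the Brauer-permutation-lemma input) and that $\mathfrak{S}$ is the unique finest $P$-invariant theory. Notably, the irreducibility hypothesis on $V$ serves only to place the lemma in its intended context and does not enter this core argument.
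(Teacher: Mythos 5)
Your proof is correct and is essentially the paper's argument in dual form: where the paper adjoins complex conjugation on $\Irr(V)$ (forming the join $\mathfrak{m}_P(V)\vee\mathfrak{S}$ of the $P$-orbit theory with the $\{\chi,\overline{\chi}\}$ theory), you adjoin inversion on $V$ and take the orbit theory of $\langle P,\iota\rangle$, which produces the same third $P$-invariant theory. Both arguments turn on the identical parity point — an orbit of odd $p$-power size cannot be stable under the central order-two automorphism when $q$ is odd — so there is no gap and no genuinely different route here.
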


\begin{proof}
We write $\Irr (V) = \{ 1 \} \cup O_{1} \cup O_2 \cup \cdots \cup O_n$, where the $O_i$'s are $P$-orbits.  Notice that if $n = 1$, then $V$ will have only one $P$-invariant supercharacter theory, and if $n = 2$, then $V$ is the union of three $P$-orbits.  Thus, we may assume that $n \ge 3$.  By the discussion on page 2360 of \cite{Ref4}, we see that the partition of $\Irr (V)$ by the $O_i$'s yields a supercharacter theory $\mathfrak {m}_P (V)$ for $V$.  As we mentioned before, a supercharacter theory of $V$ is $P$-invariant if and only if it contains $\mathfrak {m}_P (V)$ where containment is as Definition 3.5 of \cite{hen}.

We now define $\overline {O_i} = \{ \overline \theta \mid \theta \in O_i \}$ where $\overline \theta (g)$ is the complex conjugate of $\theta (g)$ for all $g \in V$.  It is not difficult to see that $\overline {O_i}$ will be a $P$-orbit for each $i$.  Let $\mathfrak{S}$ be the supercharacter theory of $V$ whose partition of $\Irr (V)$ consists $\{ \chi, \overline {\chi} \}$ as $\chi$ runs over $\Irr (V)$.  From \cite{hen}, we know that $\mathfrak {m}_P (V) \vee \mathfrak{S}$ (as defined on page 4425 of \cite{hen}) is a supercharacter theory where $\Irr (V)$ is partitioned by the sets $\{ \chi, \overline {\chi} \mid \chi \in O_i \}$ for the appropriate values of $i$.  If there exists an integer $j$ with $1 \le j \le n$ such that $O_{j}\neq \overline{O_j}$, then $\mathfrak {m}_P (V) \vee \mathfrak{S} \neq \mathfrak {m}_P (V)$.  Since $n \ge 3$,
we have that $\mathfrak {m}_P (V) \vee \mathfrak{S} \ne \mathfrak {M} (V)$.  Finally, since $\mathfrak {m}_P (V) \vee \mathfrak{S}$ contains $\mathfrak {m}_P (V)$, it is $P$-invariant.  Thus, $V$ has more than two $P$-invariant supercharacter theories which contradicts our assumption.  Therefore, we must have $O_{i} = \overline{O_i}$ for all $i = 1, \dots, n$.

If $q = 2$, then $\overline {\mu} = \mu$ for every character $\mu \in \Irr (V)$ since $V$ is elementary abelian.  On the other hand, if $q \ne 2$, then $\overline {\mu} \ne \mu$ for every character $1_V \ne \mu \in \Irr (V)$.  This implies that $\mu$ and $\overline {\mu}$ must be in the same $P$-orbit when $\mu \ne 1_V$, and this can only occur if $2$ divides $|O_i|$ for all $i$.  Since the $|O_i|$ are $p$-powers, we conclude that $p = 2$.
\end{proof}

We now consider the case where a $p$-group $P$ acts faithfully and irreducibly on an elementary abelian $q$-group $V$ so that $V$ has one or two $P$-invariant supercharacter theories.

In Theorem B of \cite{Ninomiya199103} (5)-(10) give the examples where $P$ acts on $Q$ having exactly three orbits and are included in the following theorem as (1) (a)-(d), (2) when $r = 2$, and (3).  We note that we determined the sixteen $2$-groups that are referred to in (1) (c) and appear in Tables \ref{table 1a} and \ref{table 1b} using the computer algebra system Magma \cite{magma}.  We note that the last group in that table is a Sylow $2$-subgroup of $\GL_4 (3)$ and the other groups are subgroups of that Sylow $2$-subgroup that have the same orbits on $V$.  In addition, these groups correspond with the groups in Theorem B (9) of \cite{Ninomiya199103}.

\begin{thm}\label{three P-invariant}
Let $p$ and $q$ be distinct primes, let $V$ be an elementary abelian $q$-group, and let $P$ be a nontrivial $p$-group that acts faithfully and irreducibly on $V$.
Then $V$ has two $P$-invariant supercharacter theories if and only if  one of the following is true:
    \begin{enumerate}
    \item $p = 2$ and one of the following holds:
       \begin{enumerate}
       \item $q$ is a Fermat prime, $|V| = q$, and $P$ is the unique subgroup of index $2$ in ${\rm Aut} (V)$;
       \item $q = 3$, $|V| = 3^2$, and $P$ is cyclic of order $4$ or $P$ is dihedral of order $8$;
       \item $q = 3$, $|V| = 3^4$, and $P$ is a subgroup of $\GL_4 (3)$ that is conjugate one of the sixteen $2$-groups in Tables \ref{table 1a} and \ref{table 1b}.
       \item $q \ge 5$ is a Fermat prime, $|V| = q^2$, and $P$ is either a Sylow $2$-subgroup group $T$ of $\GL_2 (q)$ (i.e., $Z_{q-1} \wr Z_2$) or $P$ is the nonabelian, two generated subgroup of index $2$ in $T$.
       \item $q = 7$, $|V| = 7^2$, and $P$ is one of the following: cyclic of order $16$, generalized quaternion of order $16$, or semidihedral of order $32$ (i.e., a Sylow $2$-subgroup of ${\rm GL}_2 (7)$).
       \end{enumerate}
    \item $p$ and $q$ are primes so that there exists a positive integer $n$ and a prime $r$ so that $q - 1 = p^n r$, $|V| = q$, and $P$ is a Sylow $p$-subgroup of ${\rm Aut} (V)$.  (Note that one of $p$ or $r$ must be $2$.)
    \item $p \ne 2, 3$, $q = 3$, $l$ is an integer so that $3^l - 1 = 2 p^n$ for some integer $n$, $|V| = 3^l$ and $P \cong Z_{p^n}$.
    \end{enumerate}
\end{thm}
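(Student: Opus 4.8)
The plan is to recast the whole question lattice-theoretically. The $P$-invariant supercharacter theories of $V$ all lie between the orbit theory $\mathfrak{m}_P(V)$ (the finest one, whose character blocks are $\{1_V\}$ together with the $P$-orbits $O_1,\dots,O_n$) and the theory $\mathfrak{M}(V)$ (the coarsest, with single nontrivial block $\Irr(V)-\{1_V\}$), so ``exactly two'' means precisely that no $P$-invariant theory lies strictly between these. I would first dispose of the easy half: if $V$ is a union of exactly three $P$-orbits, then the only partitions of $\Irr(V)$ coarser than the orbit partition that keep $\{1_V\}$ as a block are the orbit partition and $\{\{1_V\}, \Irr(V)-\{1_V\}\}$, and both underlie supercharacter theories, so there are exactly two. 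These three-orbit actions are exactly the ones classified in Theorem B of \cite{Ninomiya199103}, and matching its list against ours accounts for items (1)(a)--(d), (2) with $r=2$, and (3). Thus the real content is the converse for actions with at least four orbits, where Lemma \ref{cop lemma} forces $q=2$ or $p=2$.

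Next I would rule out $q=2$ with $n\ge 3$. Here Schur's lemma realizes $V$ as a vector space over $E=\text{End}_{\mathbb{F}_2 P}(V)\cong\mathbb{F}_{2^e}$, and the scalar group $E^\times$ together with the Frobenius of $E$ consists of additive automorphisms of $V$ that normalize $P$; adjoining some of them to $P$ produces an overgroup $H$ whose orbit theory $\mathfrak{m}_H(V)$ is $P$-invariant and coarser than $\mathfrak{m}_P(V)$. The point to check is that, because $n\ge 3$, one can choose $H$ so that $\mathfrak{m}_H(V)$ is \emph{strictly} between $\mathfrak{m}_P(V)$ and $\mathfrak{M}(V)$, yielding a forbidden third theory. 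The prototype is $V=\mathbb{F}_{16}$, $P=\mathbb{Z}_5$, where the Frobenius fuses two of the three $P$-orbits; so $q=2$ cannot occur once there are four or more orbits.

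For $p=2$ I would split according to whether $V$ is cyclic. When $|V|=q$ is prime, every supercharacter theory of $V$ arises from a subgroup of $\Aut(V)=\mathbb{Z}_{q-1}$ (the classical description of $S$-rings over a cyclic group of prime order), and such a theory is $P$-invariant exactly when its subgroup contains $P$. Since $P$ is the Sylow $2$-subgroup, these overgroups correspond to subgroups of the cyclic group $\mathbb{Z}_{q-1}/P$ of order equal to the odd part of $q-1$, and there are exactly two of them precisely when that odd part is a prime $r$, i.e. $q-1=2^n r$. This delivers case (2) with $r$ odd and, read in reverse, proves the converse for all of case (2), the condition $[\mathbb{Z}_{q-1}:P]=r$ prime being exactly the hypothesis there. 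When $V$ is not cyclic I would run the same overgroup analysis inside a Singer cycle of $\GL_d(q)$, using the self-pairing $O_i=\overline{O_i}$ supplied by Lemma \ref{cop lemma}, to pin the surviving possibilities down to $V=\mathbb{F}_7^2$ with $P$ a cyclic, generalized quaternion, or semidihedral $2$-subgroup of $\GL_2(7)$, namely case (1)(e); the finite exceptional configuration (1)(c) is verified directly in Magma \cite{magma}, as recorded in Tables \ref{table 1a} and \ref{table 1b}.

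The main obstacle I anticipate is the noncyclic $p=2$ analysis. Unlike the prime-order case, supercharacter theories of $\mathbb{F}_q^d$ need not come from subgroups of $\Aut(V)$, so showing that no intermediate $P$-invariant theory exists for (1)(e)---and that no further noncyclic family slips through---requires controlling \emph{all} $P$-invariant $S$-rings over $V$, not merely the cyclotomic ones arising from the Singer cycle. The second delicate point is making the ``strictly between'' step of the $q=2$ elimination uniform rather than example-by-example, since one must guarantee that the overgroup $H$ neither preserves the orbit partition nor collapses it entirely.
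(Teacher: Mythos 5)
Your outline gets the easy reductions right (three orbits give exactly two theories; Hendrickson's Theorem 9.1 of \cite{hen} settles $|V|=q$; Lemma \ref{cop lemma} forces $q=2$ or $p=2$ once there are at least four orbits), but the two steps you yourself flag as delicate are genuine gaps, and in one of them your proposed mechanism provably fails on the very configuration that has to be eliminated. For the $q=2$ elimination, the surviving case after the easy reductions is $|V|=2^6$ with $p=3$ and $P$ of order $27$ or $81$ inside $Z_3\wr Z_3$, acting with orbit sizes $1,9,27,27$ on $V=W_1\oplus W_2\oplus W_3$, $|W_i|=4$. Here $\mathrm{End}_{\mathbb{F}_2P}(V)\cong\mathbb{F}_4$, and both the scalar group $E^\times$ and the Frobenius of $E$ act coordinatewise and preserve the weight of a vector, hence preserve each $P$-orbit $B_1,B_2,B_3$; every overgroup $H$ you can build this way satisfies $\mathfrak{m}_H(V)=\mathfrak{m}_P(V)$, so no ``strictly intermediate'' orbit theory is produced. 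The paper has to find the third theory by a direct character-value computation (Table \ref{table 2^6}): the merge of $O_1$ with $O_3$ and $V_1$ with $V_3$ works because $\Theta_1+\Theta_3$ is constant on $V_1\cup V_3$, and this theory is not of the form $\mathfrak{m}_H(V)$ for any semilinear overgroup. So the uniform overgroup argument you hope for does not exist by this route.

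The second gap is the noncyclic $p=2$ case. You propose to run the analysis ``inside a Singer cycle,'' i.e.\ inside $\Gamma(V)$, but an irreducible $2$-subgroup of $\GL_n(q)$ need not embed there: $Z_{q-1}\wr Z_2\le\GL_2(q)$ is irreducible and imprimitive and, for $q\ge 5$, is larger than a Sylow $2$-subgroup of $\Gamma(q^2)$. The paper's proof hinges on the primitive/imprimitive dichotomy: in the primitive case every abelian normal subgroup of $P$ is cyclic, so $P$ has a cyclic subgroup of index at most $2$ and embeds in $\Gamma(V)$ by Manz--Wolf, after which the Galois-coset analysis, Zsigmondy's theorem, and the Isaacs--Passman half-transitivity theorem pin down $q\in\{3,5,7\}$; in the imprimitive case one constructs the ``weight'' supercharacter theory $(\mathcal{C},\mathcal{B})$ with $p+1$ blocks, which is automatically a third invariant theory whenever $P$ has at least four orbits. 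Neither the dichotomy nor the weight construction appears in your outline, and without them the reduction to case (1)(e) does not go through. Finally, you correctly identify that (1)(e) requires ruling out all intermediate $P$-invariant theories, not just cyclotomic ones, but you leave it unresolved; the paper settles it by computing the supercharacter table (Table \ref{table (1)(e)}) and observing that no supercharacter takes equal values on two distinct orbits, so no merge is possible.
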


\begin{proof}
We now assume that $V$ has two $P$-invariant supercharacter theories.  Suppose first that $|V| = q$.  Since $P$ is nontrivial, this implies that $p$ divides $q - 1$, so $q \ge 3$, and thus, $q$ is odd.  By Lemma \ref{cop lemma}, this implies that either $P$ has three orbits on $V$ or $p=2$,  In Theorem 9.1 of \cite{hen}, Hendrickson describes all the supercharacter theories of a cyclic group.  Since $V$ has prime order, we see from that theorem that all of the supercharacter theories of $V$ have the form $\mathfrak {m}_A (V)$ where $A$ is a subgroup of ${\rm Aut} (V)$.  Hence, the $P$-invariant supercharacter theories will have the form $\mathfrak {m}_A (V)$ where $P \le A \le {\rm Aut} (V)$.  Therefore, $V$ has exactly two $P$-invariant supercharacter theories if and only if $P$ is a maximal subgroup of ${\rm Aut} (V)$.

Since $V$ is cyclic of odd prime order, ${\rm Aut} (V)$ is cyclic of order $q-1$, and so, $P$ is a maximal subgroup if and only if it has prime index in ${\rm Aut} (V)$.  If $q-1$ is a power of $2$, then $q$ is a Fermat prime, and ${\rm Aut} (V)$ is cyclic $2$-group, so $P$ is the unique subgroup of index $2$.  Otherwise, $q - 1$ is not a power of $2$, and the only way a $p$-subgroup can be maximal in ${\rm Aut} (V)$ is if it is the Sylow $p$-subgroup and there exists a positive integer $n$ and a prime $r$ so that $q-1 = p^n r$.  This completes the case where $|V| = q$.

Thus, we may assume that $|V| = q^n$ where $n>1$.  Suppose that $V$ is primitive as a module for $P$.  Observe that every abelian normal subgroup of $P$ has to act faithfully and homogeneously on $V$, and so every abelian normal subgroup of $P$ is cyclic.  By Corollary 1.10 (v) and (vi) of \cite{MaWo}, we know that $P$ has a cyclic subgroup of index dividing $2$.  Then applying Corollary 2.3 of \cite{MaWo}, we see that we may view $P$ as a subgroup of $\Gamma (V)$, the semi-linear group of $V$.  (The definition of $\Gamma$ can be found on pages 37-38 of \cite{MaWo}.)  Let $F = \Gamma_0 (V)$ and note that $F$ is cyclic of order $q^n - 1$ and $|\Gamma (V):F| = n$.  If $P$ is abelian, then Theorem 2.1 of \cite{MaWo} implies that $P \le F$.  If $P$ is not abelian, then arguing as in the proof of Lemma 2.3 of \cite{MaWo} and applying Corollary 1.10 (v), Lemma 2.2, and Theorem 2.1 of \cite{MaWo}, we have that $F \cap P$ is the cyclic subgroup of index $2$ in $P$.  Notice that since $P$ is a $p$-group, this implies that when $P$ is nonabelian, then $p = 2$, and $2$ divides $n$.

If $|F \cap P| = |V| - 1$, then $F \cap P$ acts transitively on $\Irr (V) - \{ 1_V \}$, and so, $V$ would have one $P$-invariant super-character theory.  Thus, we know that $|F \cap P| < |V| - 1$.  Since $|F \cap P| < |V| - 1$, we have that $F$ is not contained in $P$.  Recall that we may view $V$ as being the additive group of a field, $F$ as being a subgroup of the multiplicative group of the field, and if we take $C$ to be the Galois group for $F$ over $Z_q$, then $C$ is a complement for $F$ in $\Gamma (V)$.

It is known that orbits for $F \cap P$ on $V$ correspond to the cosets of $F \cap P$ as a subgroup of $F$ and that $C$ permutes the cosets of $F \cap P$ in $F$.  Also, an element $c$ of $C$ will fix a coset of $F \cap P$ if and only if the coset has a nonempty intersection with the subgroup of $F$ that corresponds to the fixed field of $c$.  Thus, $C$ fixes a coset of $F \cap P$ if and only if the coset has a nonempty intersection with the subgroup of $F$ corresponding to the fixed field of $C$.  Note that $Z_q$ is the fixed field for $C$, and so, the corresponding subgroup of $F$ will have order $q-1$.

We now suppose that $P$ is abelian, and hence cyclic.  If the subgroup of order $q-1$ in $F$ does not supplement $P$, then there will be cosets of $P$ that are not fixed by $C$.  Hence, $\mathfrak {m}_{PC} (V)$ will be different from both $\mathfrak {m}_P (V)$ and $\mathfrak {M} (V)$.  It follows that $V$ has at least three $P$-invariant supercharacter theories.  Thus, we may assume that $F$ is the product of $P$ and the cyclic group of order $q-1$.  Since $P < F$, this implies that $q \ne 2$, and so by Lemma \ref{cop lemma}, we know that $p = 2$. Also, note that if $P$ is not a maximal subgroup of $F$, then there exists a subgroup $P < E < F$, and again it is not hard to see that $\mathfrak {m}_{E} (V)$ will be different from both $\mathfrak {m}_P (V)$ and $\mathfrak {M} (V)$ and that $V$ has at least three $P$-invariant supercharacter theories.  Thus, $P$ will be a maximal subgroup of $F$, and thus, $|F:P|$ is a prime.  Hence, $q-1$ is the product of a prime with a power of $2$, and $(q^n-1)/(q-1)$ is a power of $2$.   By the Zsigmondy prime theorem, we see that $n = 2$ and $q$ is a Mersenne prime.  Note that $q+1$ is now a power of $2$.  If $q > 3$, then since $3$ does not divide $q$ or $q + 1$, we must have that $3$ divides $q-1$.  Also, $4$ divides $q+1$, so $4$ cannot divide $q - 1$.  Hence, the only possibilities for $q$ are $q = 3$ and $q = 7$.  When $q=3$, we deduce that $P$ is cyclic of order $4$ and when $q = 7$, we see that $P$ is cyclic of order $16$.  This completes the case when $P$ is cyclic.

Thus, we have the case when $V$ is primitive for $P$ and $P$ is not abelian.  This implies that $p = 2$ and $2$ divides $n$.  Also, $F \cap P$ has index $2$ in $P$.  Since $F \cap P$ is cyclic, this implies that $P$ is either dihedral, semi-dihedral, or generalized quaternion.  Notice that every $P$ orbit on $V$ will be either an $F \cap P$-orbit or the union of two $F \cap P$ orbits.  Also, $P$ is a Frobenius complement if and only if $P$ is generalized quaternion group, and in that case, every $P$ orbit will be the union of two $F \cap P$-orbits.  In the other cases, $P$ will have at least one orbit that is an $F \cap P$-orbit.

Suppose that $F \cap P$ is not maximal in $F$, so there exists a group $N$ so that $F \cap P < N < F$ and $|N:F \cap P|$ is prime.  Notice that each $N$-orbit will be a union of $|N:F \cap P|$ orbits of $F \cap P$, and the $PN$-orbits will be unions of $N$-orbits.  Notice that $\mathfrak {m}_{NP} (V)$ will equal $\mathfrak {M} (V)$ if and only if $PN$ acts transitively on $V - \{ 0 \}$.  This will imply that $q^n - 1$ is a prime times a power of $2$.  By the Zsigmondy prime theorem, this implies that $q$ is either a Mersenne prime or a Fermat prime and $n = 2$.  Also, we see that one of $q+1$ and $q-1$ is a power of $2$ that is at least $4$.  It follows that the other one cannot be divisible by $4$.  On the other hand, $3$ must divide one of $q-1$, $q$, or $q+1$; so if $q > 1$, then one of $q-1$ and $q+1$ which is not a power of $2$ is divisible by $6$ and not $12$.  It follows that the only possibilities for $q$ are $3$, $5$, and $7$.  Also, we see that $PN$ acts Frobeniusly on $V$, so $P$ is generalized quaternion, so $q^2 - 1 = |PN| = |F|$.  In particular, if $q = 3$, then $|PN| = 8$ and $|P| = 4$ which contradicts the fact that $P$ is nonabelian.  If $q = 7$, then $|PN| = 48$ and $|P| = 16$, so $P$ is the generalized quaternion group of order $16$.  If $q = 5$, then $|PN| = 24$ and $|P| = 8$, so $P$ is the quaternions, and the $V - \{ 0 \}$ consists of three $P$-orbits of size $2$.  However, $P$ is properly contained in a Sylow $2$-subgroup $T$ of ${\rm GL}_2 (5)$, and we will see later that $T$ has one orbit of size $8$ and one orbit of size $16$ on $V - \{ 0 \}$.  Thus, $\mathfrak {m}_T (V)$ is neither $\mathfrak {m}_P (V)$ nor $\mathfrak {M} (V)$, so $V$ has at least three $P$-invariant supercharacter theories in this case.

We now assume that $PN$ does not act transitively on $V - \{ 0 \}$, so $\mathfrak {m}_{NP} (V)$ is different than $\mathfrak {M} (V)$.  If $|N:F \cap P| > 2$, it follows that the $PN$-orbits will be different than the $P$-orbits, so $\mathfrak {m}_{PN} (V)$ will be different than $\mathfrak {m}_P (V)$, and we see that there exist at least three $P$-invariant supercharacter theories for $V$, a contradiction.  Now suppose that $|N: F \cap P| = 2$, so that $PN$ will be a $2$-group.  Note that every $N$-orbit is the union of two $F \cap P$-orbits, and every $PN$-orbit will be either an $N$-orbit or the union of two $N$-orbits.  The only way every $PN$-orbit could be a $P$-orbit is if every $P$-orbit is the union of two $P \cap N$-orbits, so $P$ must be a generalized quaternion group, and every $PN$-orbit must be an $N$-orbit, so $PN$ is not a Frobenius complement.  Notice that a dihedral group cannot have a subgroup that is a generalized quaternion group, so $PN$ must be a semi-dihedral group.  Also, all of the $PN$-orbits on $V$ have the same size, so $PN$ acts half-transitively on $V$.  By Theorem II of \cite{half}, we have that $q$ is a Mersenne prime, and $|PN|$ has the same size as a Sylow $2$-subgroup of $\Gamma (V)$, so $PN$ is a Sylow $2$-subgroup of $\Gamma (V)$.  If $|F:N|$ is not prime, then we can find $N_1$ so that $N < N_1 <F$.  Using the arguments above, we see that $\mathfrak {m}_{N_1P} (V)$ is neither $\mathfrak {M} (V)$ nor $\mathfrak {m}_P (V)$.  Now, $|F:N|$ is a prime implies that $q = 7$ and hence, $P$ is generalized quaternion of order $16$.

We now have the case that $|F:N \cap P|$ is a prime.  As above, this implies that $|V| = q^2$ and $q$ is either $3$, $5$, or $7$.  If $q = 3$, then $P$ has index $2$ in $\Gamma (V)$ and $|P| = 8$.  Since $P$ is nonabelian, this implies that $P$ is either dihedral or quaternion.  However, if $P$ is quaternion, then $P$ acts transitively on $V$, which is a contradiction.  Thus, $P$ is dihedral of order $8$.  If $q$ is $5$ or $7$, this implies that $P$ is a Sylow $2$-subgroup of $\Gamma (V)$.  If $q = 7$, then using order considerations, we conclude that $P$ is a Sylow $2$-subgroup of $\GL_2 (7)$.  Note that when $q  = 5$, this implies that $P$ is semi-dihedral of order $16$.  This completes the case where $V$ is primitive as a module for $P$.

We now suppose that $V$ is not primitive as a module for $P$.  In particular, we can write $V = W_1 \oplus \dots \oplus W_p$ so that $P$ transitively permutes the $W_i$'s (see Corollary 0.3 of \cite{MaWo}).  We now assign weights to the elements of $V$ and the characters in $\Irr (V)$.  The weight of an element $v = (w_1, \dots, w_p)$ with $w_i \in W_i$ is the number of nonzero $w_i$'s.  Similarly, the weight of $\chi = (\lambda_1, \dots, \lambda_p)$ for $\lambda_i \in \Irr (W_i)$ is the number of nonprincipal $\lambda_i$'s.  For $i = 0, \dots, p$, let $B_i$ be the set of elements of weight $i$ in $V$ and let $C_i$ be the set of characters of weight $i$ in $\Irr (V)$.  Note that $B_0 = \{ 0 \}$ and $C_0 = \{ 1_V \}$.  Note that $V$ is partitioned by $\mathcal {B} = \{ B_0, B_1, \dots, B_p \}$ and $\Irr (V)$ is partitioned by $\mathcal {C} = \{ C_0, C_1, \dots, C_ p \}$.  We will show that $(\mathcal {C}, \mathcal {B} )$ is a super-character theory for $V$.  Note that the action of $P$ on $V$ preserves the weight of both the elements of $V$ and the characters in $\Irr (V)$, so at $(\mathcal {C}, \mathcal {B} )$ is $P$-invariant.

We have $\{ 0 \} = B_0 \in \mathcal {B}$ and $|\mathcal {B}| = p + 1 = |\mathcal {C}|$.  Thus, it suffices to find a character for each $i$ with irreducible constituents in $C_i$ that is constant on all the $B_j$'s.  For each $i$, take $\rho^*_i$ to be the regular character of $W_i$ minus the principal character of $W_i$.  Thus, $\rho^*_i (0_i) = |W_i|-1$ and $\rho^*_i (w) = -1$ for $w \in W_i \setminus \{ 0_i \}$.  Given a subset $S$ of $\{ 1, \dots, p \}$, we define $\rho^*_S$ to be the character whose $i$th component is $\rho^*_i$ when $i \in S$ and $1_{W_i}$ when $i$ is not in $S$.  Notice that the irreducible constituents of $\rho^*_S$ will have weight $|S|$ and hence will lie in $C_{|S|}$.  We define $\sigma_0$ to be $1_V$, and for $1 \le i \le p$, we define $\sigma_i$ to be the sum of $\rho^*_S$ over all subsets $S$ of size $i$ in $\{ 1, \dots, p \}$.

Given an element $v = (w_1, \dots, w_p) \in V$, we define the support, ${\rm supp} (v)$, to be the set of $i$'s where $w_i \ne 0_i$.  In computing $\rho^*_S (v)$ notice that we obtain a $-1$ for each element of $S \cap {\rm supp} (v)$, a $q^m-1$ for each element in $S \setminus {\rm supp} (v)$, and a $1$ for the remaining components.  It follows that $\rho^*_S (v) = (-1)^{|S \cap {\rm supp} (v)|} (q^m - 1)^{|S| - |S \cap {\rm supp} (v)|}$ where $|W_i| = q^m$.  Thus, $\sigma_i (v)$ will be determined by the various sizes of the intersections of ${\rm supp} (v)$ with subsets of $\{ 1, \dots, p \}$ of size $i$.  Since this is completely determined by the size of ${\rm supp} (v)$, it follows that $\sigma_i$ is constant on the $B_j$'s.  In particular, we conclude that $(\mathcal {C}, \mathcal {B})$ is a super-character theory for $V$.  Since $|\mathcal {B}| \ge 3$, we know that $(\mathcal {C}, \mathcal {B})$ is not $\mathfrak {M} (V)$.

For $(\mathcal {C}, \mathcal {B})$ to be $\mathfrak{m}_P (V)$, it must be that $P$ acts transitively on each $B_i$.  Notice that the size of $B_2$ will be $(q^m-1)^2 p (p-1)/2$, and this can be a power of $p$ only when $p$ is $2$ or $3$.  Let $N \le P$ be the stabilizer in $P$ of $W_1$, so $|P:N| = p$.  This implies that $N$ is a normal subgroup of $P$, and since the stabilizers of the $W_i$'s are all conjugate in $P$, it follows that $N$ stabilizes all of the $W_i$'s.  Note that $C_P (w) \subseteq N$ for all elements of $w \in W$ having weight $1$.  Let $H = N/C_N(W_1)$, and by Lemma 2.8 of \cite{MaWo}, we know that $P$ is a subgroup of $H \wr Z_p$.  We also know that $H$ acts transitively on $W_1 \setminus \{ 0_1 \}$.  By Theorem 6.8 of \cite{MaWo}, we see that $H \le \Gamma (W_1)$.

If $p = 3$, then $q = 2$, and $m = 2$.  We see that $|V| = 2^6$.  We have that $|B_1| = 3 \cdot 3 = 9$ and $|B_2| = 3^2 \cdot 3 = 27$ and $|B_3| = 3^3 = 27$.  Notice that $|W_1| = 4$ and so, $\Gamma (W_1) \cong S_3$.  Since $H$ is a $3$-subgroup, we see that $H \cong Z_3$ and so $P$ is a subgroup of $Z_3 \wr Z_3$.  Notice by order considerations, we see that $Z_3 \wr Z_3$ is isomorphic to a Sylow $3$-subgroup of $\GL_6 (2)$.  Also, we see that $P$ must have at least order $27$, so either $P$ is $Z_3 \wr Z_3$ or $P$ is a maximal subgroup of $Z_3 \wr Z_3$.  Using Magma, we find the orbits of a Sylow $3$-subgroup $T$ of $\GL_6 (2)$ and its maximal subgroups on $V$, and from Magma, we see that the maximal subgroup having exponent $3$ has different orbits than $T$ and thus $V$ will have more than two $P$-invariant supercharacter theories if $P$ is one of these subgroups, but the two maximal subgroups that have exponent $9$ and are nonabelian have the same orbits as $T$.  Since they have order $27$, they must be extra-special.

We let $O_1$, $O_2$, and $O_3$ be the nonprincipal $P$-orbits of $V$, and let $\Theta_i$ be the supercharacter for $O_i$.  Let $V_1$, $V_2$, and $V_3$ be the nonzero $P$-orbits of $V$ and let $v_i$ be a representative of $V_i$.  Using Magma, we compute the values of the supercharacters for the orbits in $\Irr (V)$ on the orbits of $V$.  The values can be found in Table \ref{table 2^6}.

\begin{table}
$$\begin{tabular}{c|cccc}
 & 0 & $v_1$ & $v_2$ & $v_3$ \\
\hline
$1_V$ & 1 & 1 & 1 & 1 \\
$\Theta_1$ & 9 & 5 & 1 & -3 \\
$\Theta_2$ & 27 & 3 & -5 & 3 \\
$\Theta_3$ & 27 & -9 & 3 & - 1
\end{tabular}$$
\caption{$P$-invariant supercharacters when $|V| = 2^6$}\label{table 2^6}
\end{table}

We see that the only supercharacter for an orbit in $\Irr (V)$ that has the same value on two orbits for $V$ is $\Theta_2$ and the two orbits in $V$ are $V_1$ and $V_3$.  Observe that $\Theta_1 + \Theta_3$ has the value $-4$ on both $v_1$ and $v_3$.  Thus, we obtain another supercharacter theory by merging $O_1$ with $O_3$ and $V_1$ with $V_3$.  This gives three $P$-invariant supercharacter theories.

Now suppose that $p = 2$.  We see that $|W_1| - 1 = q^m - 1$ is a power of $2$.  This implies that either $q$ is a Fermat prime and $m = 1$ or $q = 3$ and $m = 2$.  Suppose first that $|W_1| = q$.  This implies that $H$ is cyclic of order $q - 1$ and that $P$ is a subgroup of $Z_{q-1} \wr Z_2$.  Notice that $|B_1| = 2 (q-1)$ and $|B_2| = (q-1)^2$. In particular, $(q-1)^2$ divides $|P|$.  The size of a Sylow $2$-subgroup of ${\rm GL}_2 (q)$ is $2(q-1)^2$.  Also, one can see that a Sylow $2$-subgroup of ${\rm GL}_2 (q)$ must contain a copy of $Z_{q-1} \wr Z_2$.  Therefore, a Sylow $2$-subgroup $T$ of ${\rm GL}_2 (q)$ is isomorphic to $Z_{q-1} \wr Z_2$, and hence, $P$ is either isomorphic to $T$ or is a maximal subgroup of $T$.  If $q = 3$, then $T = {\rm GL}_2 (3)$, and it acts transitively on $V - \{ 0 \}$.  Thus, $P$ must be a maximal subgroup.  Note that $P$ has three maximal subgroups, one of which is cyclic and acts transitively and one of which is the quaternions which also acts transitively.  The remaining maximal subgroup is the dihedral group of order $8$ which must be $P$.  Suppose now that $q > 3$.  It is not difficult to see that $B_0$, $B_1$, and $B_2$ are the three orbits for $T$ acting on $V$.  It follows that $P$ must have these three orbits, and the result now follows from Theorem B of \cite{Ninomiya199103}.  The remaining case is that $|W_1| = 9$.  In this case, if $T$ is a Sylow $2$-subgroup of ${\rm GL}_4 (3)$, then $T \cong S_{16} \wr Z_2$ where $S_{16}$ is the semidihedral group of order $16$.  Again, it is not difficult to see that $B_0$, $B_1$, and $B_2$ are the three orbits of $T$ on $V$.  Thus, $P$ will need to be a subgroup of $T$ that has the same orbits as $T$.  Again, one can appeal to Theorem B of \cite{Ninomiya199103} or one can compute these subgroups in Magma.  This completes the proof when $V$ has two $P$-invariant character theories.

We now suppose that if $P$ and $V$ are any of the given groups, then $V$ has two $P$-invariant supercharacter theories.  Observe that if $P$ has three orbits on $\Irr (V)$, then the only possible $P$-invariant supercharacter theories are $\mathfrak{m}_P (V)$ and $\mathfrak {M} (V)$.  Note that this handles (1) (a)-(d), (2) when $r = 2$, and (3).  Also, in (2) when $|V| = q$, $p=2$, and $r$ is odd; we can use Theorem 9.1 of \cite{hen} to see that the $p$-invariant supercharacters of $V$ are in bijection with the subgroups of ${\rm Aut} (V)$.  Since $q-1 =
2^n r$, it follows that $P$ is a maximal subgroup of ${\rm Aut} (V)$, and this yields the conclusion.

In the remaining case, (1) (e), we see that $P$ has four orbits on $\Irr (V)$.  One orbit is $\{ 1_V \}$ and three other orbits.  If there is a $P$-invariant supercharacter theory $(\mathcal {X}, \mathcal {Y})$ other than $\mathfrak {m}_P (V)$ and $\mathfrak {M} (V)$, then $\mathcal {X}$, the partition of $\Irr (V)$, must have a union of two of these nonprincipal orbits as one set $X_1$ and the other nonprincipal set $X_2$ will be the remaining nonprincipal orbit.  Similarly, the partition $\mathcal {Y}$ of $V$ will have one set $Y_1$ that is the union of two of the nonzero $P$-orbits of $V$ and the other set $Y_2$ will be a nonzero $P$-orbit.  Thus, the supercharacters $\chi_1$ for $X_1$ and $\chi_2$ for $X_2$ will have to have the same value on both of the $P$-orbits that make up $Y_1$.

As above, we let $O_1$, $O_2$, and $O_3$ be the nonprincipal $P$-orbits of $V$, and let $\Theta_i$ be the supercharacter for $O_i$.  Let $V_1$, $V_2$, and $V_3$ be the nonzero $P$-orbits of $V$ and let $v_i$ be a representative of $V_i$.  Using Magma, we compute the values of the supercharacter for the orbits in $\Irr (V)$ on the orbits for $V$.  The values appear in Table \ref{table (1)(e)}

\begin{table}
$$\begin{tabular}{c|cccc}
 & 0 & $v_1$ & $v_2$ & $v_3$ \\
\hline
$1_V$ & 1 & 1 & 1 & 1 \\
$\Theta_1$ & 16 & $A_1$ & $A_2$ & $A_3$ \\
$\Theta_2$ & 16 & $A_2$ & $A_3$ & $A_1$ \\
$\Theta_3$ & 16 & $A_3$ & $A_1$ & $A_2$
\end{tabular}$$
\caption{$P$-invariant Supercharacters in (1)(e)}\label{table (1)(e)}
\end{table}

Where $A_1 = 3 \zeta^5 + \zeta^4 + \zeta^3 + 3\zeta^2 + 1$, $A_2 = -\zeta^5 + 2 \zeta^4 +2 \zeta^3 - \zeta^2$, and $A_3 = -2 \zeta^5 - 3\zeta^4 - 3\zeta^3 - 2 \zeta^2 -2$ and $\zeta$ is a primitive $7$ root of unity.  Notice that none of the supercharacters for the orbits in $\Irr (V)$ have the same value on two orbits for $V$, so it is not possible to have another $P$-invariant supercharacter theory.
\end{proof}

In the introduction, we mentioned that we would display groups with three super-Brauer character theories that have more than three $p$-regular conjugacy classes.  We now use Theorem \ref{three P-invariant} to produce these examples. The first example is $G = V \rtimes P$ where $V$ is elementary abelian and $P$ is either cyclic of order $16$ or semidihedral of order $32$.  The second example also has $G = V \rtimes P$, but in this case $|V| = q$ where $q$ is a prime with the property that $q - 1 = 2^l r$ for some integer $l$ and odd prime $r$.  In this case, $P$ is a cyclic group of order $2^l$.  Examples of primes $q$ with this property are $q = 7, 11, 13, 23, 29, 39, 41, \dots$.

Finally, we note that we can apply Theorem \ref{two theories} and Theorem \ref{three P-invariant} to see that any solvable group with three super-Brauer character theories must either have three $p$-regular classes or must be one of the groups mentioned in the previous paragraph.  For completeness, we have listed $G/{\bf O}_p (G)$ for all of the groups that have two super-Brauer character theories.  We have listed these groups in three tables.  The first two tables have the groups of the form $V \rtimes P$ where $V$ is an elementary abelian $q$-group for some prime $q$ and $P$ is a $p$-group that acts faithfully and irreducibly on $V$.  In Table \ref{table 2a}, we have the ones where the action of $P$ on $V$ has three orbits.  Table \ref{table 2b} includes the ones where the action of $P$ has more than three orbits on $V$.  Notice that these are precisely the groups with two super-Brauer character theories that have more than three $p$-regular conjugacy classes.  Finally, in Table \ref{table 2c}, we list the groups with three $p$-regular classes that do not have the form $V \rtimes P$ for an elementary abelian $q$-group $V$ and a nontrivial $p$-group $P$.  The groups with three $p$-regular conjugacy classes were classified in Theorem B of \cite{Ninomiya199103} and the Theorem in \cite{Ninomiya199301}.  As we mentioned above, we used the computer algebra system Magma, \cite{magma}, to find the $2$-groups that act on the elementary abelian group of order $81$ with three orbits.  We also use Huppert's classification of groups that act transitively on the nonidentity elements of an elementary abelian group found as Theorem 6.8 in \cite{MaWo} in place of Ninomiya's description of these groups.  Note that it appears that Ninomiya missed one group in his classification.  In particular, he missed the group $(\mathbb{Z}_3)^4 \rtimes S$ where $F(S)$ is an extra-special group of order $2^5$, the quotient $F_2(S)/F(S)$ has order $5$, and the quotient $S/F_2(S)$ is cyclic of order $4$ that is described in conclusion (a) of Theorem 6.8 of \cite{MaWo}.  We have double-checked using Magma the groups of the form $V \rtimes S$ where $V$ is elementary abelian of order $25$ or $81$ and $S$ acts transitively on $V - \{ 0 \}$.


\section*{Acknowledgments}
The first author thanks the support of China Scholarship Council,
Department of Mathematical Sciences of Kent State University for its hospitality,
Funds of Henan University of Technology (2014JCYJ14, 2016JJSB074),
the Project of Department of Education of Henan Province (17A110004),
the Projects of Zhengzhou Municipal Bureau of Science and Technology (20150249, 20140970),
and the NSFC (11571129).


\newpage

\begin{table}
$$\begin{tabular} {c | ccc}
$|P|$ & A & B & C\\
\hline
\medskip
$2^6$ & $\left[ \begin{array}{cccc} 2 & 2 & 0 & 0 \\ 2 & 1 & 2 & 2 \\ 2 & 1 & 2 & 1 \\ 0 & 0 & 2 & 1 \end{array} \right]$ & $\left[ \begin{array}{cccc} 0 & 0 & 2 & 2 \\ 1 & 1 & 1 & 1 \\ 1 & 0 & 1 & 0 \\ 0 & 0 & 2 & 0 \end{array} \right]$ & \\
\smallskip
$2^6$ & $\left[ \begin{array}{cccc} 2 & 2 & 0 & 0 \\ 2 & 1 & 1 & 1 \\ 2 & 1 & 2 & 1 \\ 0 & 0 & 2 & 1 \end{array} \right]$ & $\left[ \begin{array}{cccc} 0 & 0 & 2 & 2 \\ 1 & 1 & 1 & 1 \\ 1 & 0 & 1 & 0 \\ 0 & 0 & 2 & 0 \end{array} \right]$ & \\
\smallskip
$2^6$ & $\left[ \begin{array}{cccc} 1 & 2 & 2 & 1 \\ 0 & 1 & 1 & 2 \\ 1 & 1 & 1 & 0 \\ 0 & 0 & 2 & 0 \end{array} \right]$ & $\left[ \begin{array}{cccc} 1 & 0 & 1 & 1 \\ 1 & 2 & 2 & 2 \\ 1 & 0 & 0 & 1 \\ 0 & 0 & 2 & 1 \end{array} \right]$ & \\
\smallskip
$2^6$ & $\left[ \begin{array}{cccc} 1 & 0 & 1 & 1 \\ 1 & 1 & 0 & 1 \\ 1 & 0 & 0 & 1 \\ 0 & 0 & 2 & 1 \end{array}
\right]$ & $\left[ \begin{array}{cccc} 1 & 2 & 2 & 1 \\ 1 & 1 & 1 & 2 \\ 2 & 1 & 0 & 2 \\ 2 & 0 & 0 & 1 \end{array} \right]$ & \\
\smallskip
$2^7$ & $\left[ \begin{array}{cccc} 2 & 2 & 0 & 0 \\ 2 & 1 & 1 & 1 \\ 2 & 1 & 2 & 1 \\ 0 & 0 & 2 & 1 \end{array}
\right]$ & $\left[ \begin{array}{cccc} 0 & 0 & 1 & 1 \\ 1 & 2 & 1 & 0 \\ 1 & 0 & 0 & 2 \\ 0 & 0 & 0 & 1 \end{array}
\right]$ & \\
\smallskip
$2^7$ & $\left[ \begin{array}{cccc} 2 & 2 & 0 & 0 \\ 2 & 1 & 2 & 2 \\ 2 & 1 & 2 & 1 \\ 0 & 0 & 2 & 1 \end{array}
\right]$ & $\left[ \begin{array}{cccc} 0 & 0 & 1 & 1 \\ 1 & 2 & 1 & 0 \\ 1 & 0 & 0 & 2 \\ 0 & 0 & 0 & 1 \end{array}
\right]$ & \\
\smallskip
$2^7$ & $\left[ \begin{array}{cccc} 1 & 0 & 1 & 1 \\ 1 & 1 & 0 & 1 \\ 1 & 0 & 0 & 1 \\ 0 & 0 & 2 & 1 \end{array}
\right]$ & $\left[ \begin{array}{cccc} 1 & 1 & 0 & 0 \\ 0 & 2 & 0 & 0 \\ 1 & 2 & 2 & 0 \\ 0 & 0 & 0 & 1 \end{array} \right]$ & $\left[ \begin{array}{cccc} 2 & 0 & 1 & 1 \\ 1 & 2 & 0 & 1 \\ 2 & 2 & 2 & 2 \\ 0 & 1 & 0 & 0 \end{array}
\right]$ \\
\smallskip
$2^7$ & $\left[ \begin{array}{cccc} 1 & 1 & 0 & 0 \\ 0 & 2 & 0 & 0 \\ 1 & 2 & 2 & 0 \\ 0 & 0 & 0 & 1 \end{array} \right]$ & $\left[ \begin{array}{cccc} 1 & 0 & 1 & 1 \\ 1 & 2 & 2 & 2 \\ 1 & 0 & 0 & 1 \\ 0 & 0 & 2 & 1 \end{array} \right]$ & $\left[ \begin{array}{cccc} 0 & 0 & 2 & 2 \\ 1 & 1 & 1 & 1 \\ 1 & 0 & 1 & 0 \\ 0 & 0 & 2 & 0 \end{array}
\right]$ \\
\smallskip
$2^7$ & $\left[ \begin{array}{cccc} 1 & 1 & 0 & 0 \\ 0 & 2 & 0 & 0 \\ 1 & 2 & 2 & 0 \\ 0 & 0 & 0 & 1 \end{array} \right]$ & $\left[ \begin{array}{cccc} 1 & 0 & 1 & 1 \\ 1 & 1 & 0 & 1 \\ 0 & 0 & 1 & 2 \\ 2 & 0 & 0 & 2 \end{array}
\right]$ & \\
\smallskip
$2^7$ & $\left[ \begin{array}{cccc} 1 & 0 & 1 & 1 \\ 1 & 2 & 2 & 2 \\ 0 & 0 & 1 & 2 \\ 2 & 0 & 0 & 2 \end{array}
\right]$ & $\left[ \begin{array}{cccc} 1 & 1 & 0 & 0 \\ 0 & 2 & 0 & 0 \\ 1 & 2 & 2 & 0 \\ 0 & 0 & 0 & 1 \end{array} \right]$ & \\

\end{tabular}$$
\caption{Subgroups $P = \langle A, B, C \rangle$ of $\GL_4 (3)$ with three orbits}\label{table 1a} \end{table}

\begin{table}
$$ \begin{tabular} {c | ccc}
$|P|$ & A & B & C\\
\hline
\medskip
$2^8$ & $\left[ \begin{array}{cccc} 1 & 1 & 0 & 0 \\ 0 & 2 & 0 & 0 \\ 1 & 2 & 2 & 0 \\ 0 & 0 & 0 & 1 \end{array} \right]$ & $\left[ \begin{array}{cccc} 1 & 0 & 1 & 1 \\ 1 & 2 & 2 & 2 \\ 1 & 0 & 0 & 1 \\ 0 & 0 & 2 & 1 \end{array}
\right]$ & $\left[ \begin{array}{cccc} 0 & 0 & 1 & 1 \\ 1 & 2 & 1 & 0 \\ 1 & 0 & 0 & 2 \\ 0 & 0 & 0 & 1 \end{array}
\right]$ \\
\smallskip
$2^8$ & $\left[ \begin{array}{cccc} 1 & 0 & 0 & 0 \\ 0 & 2 & 2 & 1 \\ 0 & 0 & 1 & 0 \\ 0 & 0 & 0 & 1 \end{array}
\right]$ & $\left[ \begin{array}{cccc} 2 & 2 & 0 & 0 \\ 2 & 1 & 1 & 1 \\ 2 & 1 & 2 & 1 \\ 0 & 0 & 2 & 1 \end{array}
\right]$ & \\
\smallskip
$2^8$ & $\left[ \begin{array}{cccc} 1 & 0 & 1 & 1 \\ 1 & 1 & 0 & 1 \\ 1 & 0 & 0 & 1 \\ 0 & 0 & 2 & 1 \end{array}
\right]$ & $\left[ \begin{array}{cccc} 1 & 1 & 0 & 0 \\ 0 & 2 & 0 & 0 \\ 1 & 2 & 2 & 0 \\ 0 & 0 & 0 & 1 \end{array}
\right]$ & $\left[ \begin{array}{cccc} 0 & 0 & 1 & 1 \\ 1 & 2 & 1 & 0 \\ 1 & 0 & 0 & 2 \\ 0 & 0 & 0 & 1 \end{array}
\right]$ \\
\smallskip
$2^8$ & $\left[ \begin{array}{cccc} 1 & 1 & 0 & 0 \\ 2 & 2 & 1 & 1 \\ 1 & 2 & 2 & 0 \\ 0 & 0 & 0 & 1 \end{array}
\right]$ & $\left[ \begin{array}{cccc} 1 & 0 & 1 & 1 \\ 1 & 2 & 2 & 2 \\ 1 & 0 & 0 & 1 \\ 0 & 0 & 2 & 1 \end{array}
\right]$ & \\
\smallskip
$2^8$ & $\left[ \begin{array}{cccc} 1 & 0 & 1 & 1 \\ 1 & 1 & 0 & 1 \\ 1 & 0 & 0 & 1 \\ 0 & 0 & 2 & 1 \end{array}
\right]$ & $\left[ \begin{array}{cccc} 2 & 2 & 0 & 0 \\ 2 & 1 & 1 & 1 \\ 2 & 1 & 2 & 1 \\ 0 & 0 & 2 & 1 \end{array}
\right]$ & \\
\smallskip
$2^9$ & $\left[ \begin{array}{cccc} 1 & 0 & 0 & 0 \\ 0 & 2 & 2 & 1 \\ 0 & 0 & 1 & 0 \\ 0 & 0 & 0 & 1 \end{array}
\right]$ & $\left[ \begin{array}{cccc} 1 & 1 & 0 & 0 \\ 0 & 2 & 0 & 0 \\ 1 & 2 & 2 & 0 \\ 0 & 0 & 0 & 1 \end{array}
\right]$ & $\left[ \begin{array}{cccc} 1 & 0 & 1 & 1 \\ 1 & 2 & 2 & 2 \\ 1 & 0 & 0 & 1 \\ 0 & 0 & 2 & 1 \end{array}
\right]$ \\
\end{tabular}$$
\caption{Subgroups $P = \langle A, B, C \rangle$ of $\GL_4 (3)$ with three orbits on $(\mathbb{Z}_3)^4$}\label{table 1b}
\end{table}

\begin{table}
$$\begin{tabular}{||c|c|c||}
\hline
\hline
$p$ & $G/{\bf O}_p (G)$ &  \\
\hline
\hline
$2$ & $(\mathbb{Z}_3 \times \mathbb{Z}_3)\rtimes P$ & $P$ is either $\mathbb{Z}_4$ or $D_8$  \\
\hline
$2$ & $\mathbb{Z}_q \rtimes \mathbb{Z}_{2^n}$ & $q = 2^{n+1} + 1$ is a Fermat prime for some positive integer $n$ \\
\hline
not $2$ & $\mathbb{Z}_q \rtimes \mathbb{Z}_{p^n}$ & $q = 2 p^n + 1$ is prime for some positive integer $n$ \\
\hline
not $2$ or $3$ & $(\mathbb{Z}_3)^l \rtimes \mathbb{Z}_{p^n}$ & $3^l = 2p^n + 1$ for some positive integer $l$ \\
\hline
$2$ & $(\mathbb{Z}_3)^4 \rtimes P$ & $P$ is conjugate to one of the groups in Tables \ref{table 1a} and \ref{table 1b}\\
\hline
$2$ & $(\mathbb{Z}_q \times \mathbb{Z}_q) \rtimes P$ & $q = 2^e + 1$ is a Fermat prime for some positive integer $e$ \\
& & $P$ is either a Sylow $2$-subgroup $T$ of $\GL_2 (q)$ or is conjugate \\
& & to the nonabelian, two generated subgroup of index $2$ in $T$ \\
\hline
\hline
\end{tabular}$$
\caption {Groups $V\rtimes P$ with three $p$-regular classes}\label{table 2a}
$$\begin{tabular}{|c|c|c|}
\hline
\hline
$p$ & $G/{\bf O}_p (G)$ &  \\
\hline
\hline
$2$ & $\mathbb{Z}_q \rtimes \mathbb{Z}_{2^n}$ & $q = r 2^n + 1$ is prime for some prime $r$ and positive integer $n$ \\
\hline
$2$ & $(\mathbb{Z}_7 \times \mathbb{Z}_7) \rtimes P$ & $P$ is either cyclic of order $16$, generalized quaternion \\
& & of order $16$, or semidihedral of order $32$ \\
\hline
\hline
\end{tabular}$$
\caption{Groups $V \rtimes P$ with more than three $p$-regular classes}\label{table 2b}
$$\begin{tabular}{|c|c|c|}
\hline
\hline
$p$ & $G/{\bf O}_p (G)$ &  \\
\hline
\hline
not $3$ & $\mathbb{Z}_3$ &  \\
\hline
not $2$ or $3$ & ${\rm Sym} (3)$ &  \\
\hline
$3$ & $\SL_2 (3)$ &  \\
\hline
$2$ & $E \rtimes P$ & $E$ is extraspecial of order $27$ and exponent $3$ \\
& & $P$ is either $\mathbb{Z}_8$ or $S_{16}$ \\
\hline
not $2$ & $\mathbb{Z}_q \rtimes (\mathbb{Z}_2 \times \mathbb{Z}_{p^n})$ & $q = 2 p^n + 1$ is prime \\
\hline
not $2$ or $3$ & $(\mathbb{Z}_3)^l \rtimes (\mathbb{Z}_2 \times \mathbb{Z}_{p^n})$ & $3^l = 2p^n + 1$  \\
\hline
$2$ & $(\mathbb{Z}_7 \times \mathbb{Z}_7) \rtimes (\GL_2 (3))^*$ & $(\GL_2 (3))^*$ is the group isoclinic to $\GL_2 (3)$ \\
& & that is a Frobenius complement \\
\hline
$2$ & $(\mathbb{Z}_5 \times \mathbb{Z}_5) \rtimes S$ & $S$ is the one of the three groups that acts \\
& & transitively on nonidentity elements of the elementary \\
& & abelian group of order $25$ and have two $2$-regular classes\\
\hline
$2$ & $(\mathbb{Z}_3)^4 \rtimes S$ & $S$ is one of the three groups that acts transitively \\
 & & on the nonidentity elements of the elementary abelian \\
 & & group of order $81$ and has two $2$-regular classes \\
\hline
\hline
\end{tabular}$$
\caption {Solvable groups with three $p$-regular classes not of the form $V \rtimes P$}\label{table 2c}

\end{table}

\end{document}